\newcommand{\diff}{\mathrm{d}}
\newcommand{\B}{\mathscr{C}}
\newcommand{\F}{\mathbb{F}}
\newcommand{\Z}{\mathbb{Z}}
\newcommand{\Q}{\mathbb{Q}}
\newcommand{\floor}[1]{\left\lfloor#1\right\rfloor}
\newcommand{\ceil}[1]{\left\lceil#1\right\rceil}
\newcommand{\dnd}{\nmid}
\newcommand{\isom}{\cong}
\newcommand{\Aut}{\mathrm{Aut}}
\newcommand{\leg}[2]{ \left( \frac{#1}{#2} \right) }
\newtheorem{thm}{Theorem}
\newtheorem{prop}[thm]{Proposition}
\newtheorem{lma}[thm]{Lemma}
\newtheorem{conj}[thm]{Conjecture}
\theoremstyle{remark}
\newtheorem*{rmk}{Remark}
\title{
Elliptic curves with a given number of points over finite fields
}
\author{Chantal David}
\address[Chantal David]{
Department of Mathematics and Statistics\\
Concordia University\\
1455 de Maisonneuve West\\
Montr\'eal, Qu\'ebec\\
H3G 1M8\\
Canada
}
\email{cdavid@mathstat.concordia.ca}
\urladdr{www.mathstat.concordia.ca/faculty/cdavid}
\author{Ethan Smith}
\address[Ethan Smith]{
Centre de recherches math\'ematiques\\
Universit\'e de Montr\'eal\\
P.O. Box 6128\\
Centre-ville Station\\
Montr\'eal, Qu\'ebec\\
H3C 3J7\\
Canada;
\and
Department of Mathematical Sciences\\
Michigan Technological University\\
1400 Townsend Drive\\
Houghton, Michigan\\
49931-1295\\
USA
}
\email{ethans@mtu.edu}
\urladdr{www.math.mtu.edu/~ethans}
\keywords{Average order, Elliptic curves, primes in short intervals, Barban-Davenport-Halberstam Theorem, Cohen-Lenstra Heuristics}
\subjclass[2010]{Primary 11G05; Secondary 11N13}
\begin{document}


\begin{abstract}
Given an elliptic curve $E$ and a positive integer $N$, we consider the problem of counting
the number of primes $p$ for which the reduction of $E$ modulo $p$ possesses exactly $N$ points
over $\F_p$.  On average (over a family of elliptic curves), we show
bounds that are significantly better than what is trivially obtained by the Hasse bound.  Under
some additional hypotheses, including a conjecture concerning the short interval distribution of
primes in arithmetic progressions, we obtain an asymptotic formula for the average.
\end{abstract}

\maketitle

\section{Introduction}

Let $E$ be an elliptic curve defined over the rational field $\Q$.
For a prime $p$ where $E$ has good reduction, we let $E_p$
denote the reduced curve modulo $p$ and $\#E_p(\F_p)$
the number of $\F_p$-rational points.
Then  the trace of the Frobenius morphism at $p$, $a_p(E)$, satisfies
the well-known identity
$\#E_p(\F_p)=p+1-a_p(E)$ and the Hasse bound
$|a_p(E)|<2\sqrt p$.

Let $N$ be a positive integer. We are interested in the number of primes for
which $\#E_p(\F_p)=N$.  In particular, we are interested in the behavior of the prime
counting function
\begin{equation*}
M_E(N):=\#\{p: \#E_p(\F_p)=N\}.
\end{equation*}
Note that if $\#E_p(\F_p)=N$, then the Hasse bound implies
$(\sqrt p-1)^2<N<(\sqrt p+1)^2$,
which in turn implies that
\begin{equation*}\label{p range}
N^-:=(\sqrt N-1)^2<p<(\sqrt N+1)^2=:N^+.
\end{equation*}
Hence, $M_E(N)$ is a finite number, and we have the trivial bound
\begin{equation}\label{trivial bound}
M_E(N)\ll \frac{\sqrt N}{\log(N+1)}.
\end{equation}
In~\cite{Kow:2006}, Kowalski shows that if $E$ possesses complex multiplication (CM), then
\begin{equation}\label{Kowalski's question}
M_E(N)\ll_{E,\varepsilon}N^\varepsilon
\end{equation}
for any $\varepsilon > 0$.
He asks if the same might be true for curves without CM.
However, no bound between~\eqref{trivial bound}
and~\eqref{Kowalski's question} is known for curves without CM.

Given an integer $N$, it is always possible through a Chinese Remainder Theorem
argument to find an elliptic curve $E$ that achieves the upper bound~\eqref{trivial bound}, i.e.,
such that $\#E_p(\F_p)=N$ for every prime $p$ in the interval $(N^-,N^+)$.
Yet, for a fixed curve, one expects $M_E(N)$ to be quite small.
Consider the following na\"ive probabilistic model for $M_E(N)$.
If we suppose that the values of $\# E(\F_p)$ are uniformly distributed, i.e., that
\begin{equation} \label{naivemodel}
\mbox{Prob} \left( \# E(\F_p) = N \right) =
\begin{cases}
\frac{1}{4\sqrt{p}} &\text{if } N^-<p<N^+, \\
 0 & \text{otherwise,}
\end{cases}
\end{equation}
then we expect that
\begin{equation}\label{expect M}
\begin{split}
M_E(N)
&\approx\sum_{p} \mbox{Prob} \left( \# E(\F_p) = N \right) = \sum_{N^-<p<N^+} \frac{1}{4 \sqrt{p}} \\
&\approx\frac{1}{4 \sqrt{N}} \int_{N^-}^{N^+} \frac{\diff t}{\log{t}} \sim \frac{1}{\log{N}}.
\end{split}
\end{equation}
Moreover, it is quite easy to show (see~\cite{Kow:2006} for example) that
\begin{equation}\label{reg avg of M}
\sum_{N\le X}M_E(N)=\pi(X)+O(\sqrt X)\sim\frac{X}{\log X},
\end{equation}
where as usual, $\pi(X):=\#\{p\le X: p \text{ is prime}\}$.
Therefore, the average order of $M_E(N)$ is $1/\log N$
in accordance with the above model.
Perhaps the correct way to interpret these statements is to say that
$M_E(N)$ must be equal to zero on a density one subset of the integers for the mere reason that
the primes are a subset of the integers of density zero.
Finally, we note that while it is not difficult to see that $\liminf M_E(N)=0$, 
numerical computations~\cite{Kow:2006} are consistent with the possibility that $\limsup M_E(N)=\infty$.
In fact, using the model~\eqref{naivemodel} as in~\cite{Kow:2006}, it is possible to predict this.

\section{Statement of results}

In this paper, we study the average for $M_E(N)$  
over all elliptic curves over $\Q$ (and not over $N$ as in (\ref{reg avg of M})).
Given integers $a$ and $b$, let $E_{a,b}$ be the elliptic curve defined by the
Weierstrass equation
\begin{equation*}
E_{a,b}: y^2=x^3+ax+b.
\end{equation*}
For $A,B>0$, we define a set of Weierstrass equations by
\begin{equation*}
\B(A,B):=\{E_{a,b}: |a|\le A, |b|\le B, \Delta(E_{a,b})\ne 0\}.
\end{equation*}
The following is our first main result.

\begin{thm}\label{upper bound thm}
If $A,B\ge \sqrt N\log N$ and $AB\ge N^{3/2}(\log N)^2$, then
\begin{equation*}
\frac{1}{\#\B(A,B)}\sum_{E\in\B(A,B)}M_{E}(N)\ll \frac{\log{\log{N}}}{\log{N}}
\end{equation*}
holds uniformly for $N\ge 3$.
\end{thm}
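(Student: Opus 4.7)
My plan is to open by switching the order of summation:
\begin{equation*}
\sum_{E \in \B(A,B)} M_E(N) \;=\; \sum_{N^- < p < N^+} \#\{(a,b) \in \B(A,B) : \#E_{a,b}(\F_p) = N\}.
\end{equation*}
Since the condition $\#E_{a,b}(\F_p) = N$ depends only on $(a,b) \bmod p$, the inner count equals the number of lattice points in $[-A,A] \times [-B,B]$ whose reductions mod $p$ land in
\begin{equation*}
S_p(N) := \{(r,s) \in \F_p^2 : \#E_{r,s}(\F_p) = N\}.
\end{equation*}

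Next I would compute $|S_p(N)|$ via Deuring's theorem in its Weierstrass-coefficient form (due to Schoof): writing the condition as $a_p(E_{r,s}) = p+1-N$ and invoking the identity $4p - (p+1-N)^2 = 4N - (p-N-1)^2$,
\begin{equation*}
|S_p(N)| = \tfrac{p-1}{2}\, H(D_p), \qquad D_p := 4N - (p-N-1)^2,
\end{equation*}
where $H$ is the Hurwitz-Kronecker class number. The lifting step --- counting lattice points in the box whose reductions hit $S_p(N)$ --- would be handled by an equidistribution argument giving an expected contribution of $\tfrac{(2A+1)(2B+1)}{p^2}|S_p(N)|$ plus a discrepancy error. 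Summing the main contribution over primes in the Hasse window, substituting $k = p-N-1$, and dividing by $\#\B(A,B)\asymp 4AB$, the average reduces to
\begin{equation*}
\frac{1}{N} \sum_{\substack{|k|<2\sqrt{N}\\ N+1+k\ \text{prime}}} H(4N-k^2).
\end{equation*}

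The principal analytic task is then to show
\begin{equation*}
\sum_{|k|<2\sqrt{N},\ N+1+k\ \text{prime}} H(4N-k^2) \;\ll\; N \cdot \frac{\log\log N}{\log N}.
\end{equation*}
Without the primality restriction, an Eichler-Selberg-type identity bounds $\sum_{|k|<2\sqrt{N}} H(4N-k^2)$ by a multiple of $\sigma_1(N) = O(N\log\log N)$. Imposing primality of $N+1+k$ is expected to save a further factor of $\log N$ via a sieve upper bound (Brun-Titchmarsh or Selberg) for primes in the short interval $(N^-, N^+)$ weighted by class numbers; the Barban-Davenport-Halberstam theorem flagged in the paper's keywords is the natural tool. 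Disentangling the sieve from the non-uniform class-number weight is where I expect the bulk of the technical work to lie.

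The hypotheses $A, B \geq \sqrt{N}\log N$ and $AB \geq N^{3/2}(\log N)^2$ are calibrated to control the lifting-step discrepancy: they ensure that the box, both in each coordinate and in total area, is large enough for the exponential sums attached to $S_p(N)$ to produce an error dominated by the main term $AB\log\log N/\log N$ after summation over primes.
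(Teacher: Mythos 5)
Your reduction to a sum of class numbers over primes in the Hasse window, and the outline of the lifting step via exponential/character sums, match the paper's structure (which uses the Fouvry--Murty character-sum argument and Deuring's theorem as in Proposition~\ref{average order in terms of class numbers}). The gap is in the central analytic step.

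You propose to get the $\log\log N/\log N$ factor by combining an Eichler--Selberg-type bound $\sum_{|k|<2\sqrt N}H(4N-k^2)\ll\sigma_1(N)\ll N\log\log N$ for the unrestricted sum with a sieve (Brun--Titchmarsh) to ``save a factor of $\log N$'' from the primality of $N+1+k$. This does not close, and it is not a matter of mere bookkeeping. The natural way to make the sieve visible is to open the class number via the Kronecker decomposition $H(D)=\sum_{f^2\mid D}h(D/f^2)/w(D/f^2)$ and the class number formula, which produces
\begin{equation*}
\sum_{N^-<p<N^+}H(D_N(p))\ll\sqrt N\sum_{f}\frac{1}{f}\sum_{\substack{N^-<p<N^+\\ f^2\mid D_N(p)}}L\bigl(1,\chi_{d_{N,f}(p)}\bigr).
\end{equation*}
Brun--Titchmarsh plus the bound $\#\{a\bmod f:D_N(a)\equiv 0\}\ll\sqrt f$ controls the inner prime count, and the sum over $f$ converges; but if you then use the only unconditional uniform bound $L(1,\chi)\ll\log N$, the $\log N$ from the $L$-value exactly cancels the $\log N$ saved by Brun--Titchmarsh, leaving $O(N)$ --- i.e.\ the trivial bound, not $O(N\log\log N/\log N)$. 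The Eichler--Selberg identity cannot be ``restricted to primes'': it is an exact evaluation of the full sum, not a family of upper bounds that survives thinning to a subset.

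The missing ingredient, which is the crux of the paper's Proposition~\ref{upper bound for H on avg}, is the Granville--Soundararajan short Euler product result: $L(1,\psi)\ll\log\log Q$ for all but $O(Q^{2/\alpha+o(1)})$ primitive characters of conductor at most $Q$. One splits the primes $p$ according to whether the fundamental character attached to $D_N(p)$ lies in the small exceptional set; the non-exceptional primes give the main contribution $\ll\sqrt N\log\log N\cdot\sqrt N/\log N$, and the exceptional primes contribute $O(N^{1/50+\varepsilon})$ using the crude $L(1,\chi)\ll\log q$ plus a divisor bound on the number of $p$ sharing a given fundamental discriminant. Without this almost-all refinement of the $L$-value bound, your plan stalls at the trivial estimate. (Incidentally, the Barban--Davenport--Halberstam input you flag is used only for the conditional asymptotic in Theorem~\ref{main thm}; the unconditional upper bound of Theorem~\ref{upper bound thm} needs only Brun--Titchmarsh together with Granville--Soundararajan.)
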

\begin{rmk}
We refer to the expression on the left hand side of the above inequality as the average order of
$M_E(N)$ taken over the family $\B(A,B)$.  
\end{rmk}

Under an additional hypothesis  concerning the
short interval distribution of primes in arithmetic progressions, we can prove an asymptotic
formula for the average order of $M_E(N)$ over $\B(A,B)$.
In particular, we note that all of the primes counted
by $M_E(N)$ are of size $N$ lying in an interval of length $4\sqrt N$.
Therefore, we require an appropriate
short interval version of the Barban-Davenport-Halberstam Theorem.

Given real parameters $X,Y>0$ and integers $q$ and $a$,
we let $\theta(X,Y;q,a)$ denote the
weighted prime counting function
\begin{equation*}
\theta(X,Y;q,a):=\sum_{\substack{X<p\le X+Y\\ p\equiv a\pmod q}}\log p,
\end{equation*}
and we let $E(X,Y;q,a)$ be the error in approximating $\theta(X,Y;q,a)$ by $Y/\varphi(q)$.
That is,
\begin{equation*}
E(X,Y;q,a):=\theta(X,Y;q,a)-\frac{Y}{\varphi(q)}.
\end{equation*}

\begin{conj}\label{bdh conj}
(Barban-Davenport-Halberstam for intervals of length $X^\eta$)
\label{shortBDH} Let $0 < \eta \leq 1$, and let $\beta > 0$ be arbitrary. Suppose
that $X^\eta \leq Y \leq X$, and that $Y / (\log{X})^\beta \leq Q \leq Y.$
Then
\begin{align*}
\sum_{q \leq Q}\sum_{\substack{a=1\\ (a,q)=1}}^{q}\left| E(X,Y;q,a)\right|^2 \ll YQ \log{X}.
\end{align*}
\end{conj}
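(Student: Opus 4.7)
The plan is to follow the classical Montgomery--Vaughan approach to the Barban--Davenport--Halberstam theorem, adapted to short intervals. First I would expand $E(X,Y;q,a)$ in Dirichlet characters modulo $q$: for $(a,q)=1$,
\[
\varphi(q)\,E(X,Y;q,a) \;=\; \bigl(\theta(X,Y,\chi_0)-Y\bigr) + \sum_{\chi\ne\chi_0}\bar\chi(a)\,\theta(X,Y,\chi),
\]
where $\theta(X,Y,\chi):=\sum_{X<p\le X+Y}\chi(p)\log p$. Squaring and summing over $a$ coprime to $q$, the orthogonality of characters kills the cross terms and leaves two pieces: a principal-character contribution controlled by $|\theta(X,Y)-Y|^2\sum_{q\le Q}1/\varphi(q)$, which is acceptable provided one has a short-interval prime number theorem of reasonable strength (unconditional for $Y\ge X^{7/12+\varepsilon}$ by Huxley, conjectural for smaller $\eta$), together with the main non-principal piece
\[
S \;:=\; \sum_{q\le Q}\frac{1}{\varphi(q)}\sum_{\chi\ne\chi_0\bmod q}|\theta(X,Y,\chi)|^2.
\]

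Next I would reduce $S$ to a sum over primitive characters via the identity $\theta(X,Y,\chi)=\theta(X,Y,\chi^*)+O((\log qX)^2)$, where $\chi^*$ is the primitive character inducing $\chi$. Rearranging the order of summation and using the standard estimate $\sum_{q\le Q,\,q^*\mid q}1/\varphi(q)\ll(\log Q)/\varphi(q^*)$ reduces the task to proving
\[
T(X,Y,Q) \;:=\; \sum_{q\le Q}\,\sum^{\,*}_{\chi\bmod q}|\theta(X,Y,\chi)|^2 \;\ll\; YQ\log X,
\]
where $\sum^{*}$ denotes summation over primitive characters modulo $q$.

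To estimate $T(X,Y,Q)$ I would invoke the explicit formula: for $\chi$ primitive and non-principal,
\[
\theta(X,Y,\chi) \;=\; -\sum_{|\gamma|\le T}\frac{(X+Y)^\rho-X^\rho}{\rho} \,+\, O\!\left(\frac{X}{T}(\log qX)^2\right),
\]
with $\rho=\beta+i\gamma$ ranging over the nontrivial zeros of $L(s,\chi)$ and $T$ a free parameter. The essential short-interval gain comes from the bound $|(X+Y)^\rho-X^\rho|/|\rho|\ll YX^{\beta-1}$, valid for $|\gamma|\le X/Y$, which is a factor $Y/X$ smaller than the trivial estimate $X^\beta/|\gamma|$. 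After squaring, one would apply Cauchy--Schwarz and a log-free zero-density estimate of Selberg--Ingham type for the family $\{L(s,\chi):q\le Q,\ \chi\text{ primitive mod }q\}$, then optimise $T$.

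The main obstacle is precisely this last step: producing a zero-density estimate strong enough to yield $T(X,Y,Q)\ll YQ\log X$ uniformly down to $Y\asymp X^\eta$ with $\eta$ small. In the elliptic-curve application $Y\asymp X^{1/2}$, since the Hasse interval has length $4\sqrt N$ inside $X\sim N$, and no currently available unconditional zero-density bound appears sharp enough in this regime. This is why the statement is formulated as a conjecture rather than a theorem; a proof in the full stated range would likely require either GRH-type input or a substantial advance in the large-sieve and zero-density technology for zeros near the edge of the critical strip.
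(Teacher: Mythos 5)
The statement you were asked about is Conjecture~\ref{bdh conj} in the paper, not a theorem: the authors offer no proof, only a remark recording the state of the art (Languasco, Perelli, and Zaccagnini establish it unconditionally for $\eta = 7/12+\epsilon$ and, under GRH, for $\eta = 1/2+\epsilon$), and they explicitly assume it as a hypothesis in Theorem~\ref{main thm}. So there is no proof in the paper for your attempt to be compared against. That said, your sketch is sound and matches how the known partial results are obtained: the character orthogonality step, the reduction to primitive characters with the standard $\sum_{q\le Q,\, q^*\mid q}1/\varphi(q)\ll (\log Q)/\varphi(q^*)$ trick, and the explicit formula with the short-interval gain $|(X+Y)^\rho - X^\rho|/|\rho|\ll YX^{\beta-1}$ for $|\gamma|\le X/Y$ are exactly the Montgomery--Vaughan framework adapted to short intervals. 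Your diagnosis of the bottleneck is also correct and agrees with the paper: the principal-character piece already needs a short-interval prime number theorem (Huxley's $7/12+\epsilon$, conjectural below that), and the zero-density input required to close the argument down to $Y\asymp X^{1/2-\epsilon}$ --- the regime forced by the Hasse interval of length $4\sqrt{N}$ near $X\sim N$ --- is not currently available. In short, you correctly recognized this as a conjecture and correctly located where a proof would break down; nothing to correct.
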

\begin{rmk}
If $\eta=1$, this is essentially the classical Barban-Davenport-Halberstam Theorem.
See for example~\cite[p.~196]{Dav:1980}.
The best results known are due to Languasco, Perelli, and Zaccagnini~\cite{LPZ:2010} who
show that for any $\epsilon>0$, Conjecture~\ref{shortBDH} holds unconditionally for
$\eta = 7/12 + \epsilon$ and for $\eta=1/2+\epsilon$ under the
Generalized Riemann Hypothesis.
For our application, we essentially need $\eta=1/2-\epsilon$.
\end{rmk}

\begin{thm}\label{main thm}
Let $\gamma>0$,
and assume that Conjecture~\ref{bdh conj} holds with
$\eta=\frac{1}{2}-(\gamma+2)\frac{\log\log N}{\log N}$.
 Suppose further that $A,B\ge \sqrt N(\log N)^{1+\gamma}\log\log N$ and that
$AB\ge N^{3/2}(\log N)^{2+\gamma}\log\log N$.
Then for any odd integer $N$, we have
\begin{equation*}
\frac{1}{\#\B(A,B)}\sum_{E\in\B(A,B)}M_{E}(N)
=K(N)\frac{N}{\varphi(N)\log N}+O\left(\frac{1}{(\log N)^{1+\gamma}}\right),
\end{equation*}
where
\begin{equation*}
K(N):=
	\prod_{\ell \dnd N}\left(
	1-\frac{\leg{N-1}{\ell}^2\ell+1}{(\ell-1)^2(\ell+1)}
	\right)
	\prod_{\substack{\ell\mid N\\ 2\dnd\nu_\ell(N)}}
	\left(1-\frac{1}{\ell^{\nu_\ell(N)}(\ell-1)}\right)
	\prod_{\substack{\ell\mid N\\ 2\mid\nu_\ell(N)}}
	\left(1-\frac{\ell-\leg{-N_\ell}{\ell}}
	{\ell^{\nu_\ell(N)+1}(\ell-1)}\right),
\end{equation*}
$\nu_\ell$ denotes the usual $\ell$-adic valuation, and
$N_\ell:=N/\ell^{\nu_\ell(N)}$ denotes the $\ell$-free part of $N$.
\end{thm}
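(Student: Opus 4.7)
The plan is first to interchange the order of summation, writing
\[
\sum_{E \in \B(A,B)} M_E(N) = \sum_{N^- < p < N^+} \#\{(a,b) \in [-A,A] \times [-B,B] : \Delta(E_{a,b}) \ne 0,\ \#E_{a,b}(\F_p) = N\}.
\]
For each prime $p$ in this short interval of length $4\sqrt{N}$, reduction modulo $p$ yields
\[
\#\{(a,b) : \#E_{a,b}(\F_p) = N\} = \frac{4AB}{p^2}\, \mathcal{N}(p, t_p) + O\!\left(\left(\tfrac{A}{p}+1\right)\!\left(\tfrac{B}{p}+1\right)\mathcal{N}(p,t_p)\right),
\]
where $t_p := p+1-N$ and $\mathcal{N}(p,t)$ counts pairs $(\alpha,\beta) \in \F_p^2$ with $a_p(E_{\alpha,\beta}) = t$. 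The hypotheses on $A$, $B$, and $AB$ are chosen so that, after dividing by $\#\B(A,B) \sim 4AB$, the boundary error is absorbed into the claimed remainder.

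Next I would invoke the Deuring--Eichler correspondence, which gives $\mathcal{N}(p,t) = \tfrac{p-1}{2}\, H(4p - t^2)$ (modulo explicit $O(1)$ adjustments at $j = 0, 1728$) in terms of the Hurwitz--Kronecker class number. Decomposing $H(D) = \sum_{f^2 \mid D} w(D/f^2)\, h(D/f^2)$ as a sum over conductors $f$ of orders in the imaginary quadratic field of discriminant $D/f^2$, and applying Dirichlet's analytic class number formula $h(d) = \pi^{-1} \sqrt{|d|}\, L(1, \chi_d)$, the normalized average would reduce to an expression of the shape
\[
\frac{1}{\pi\sqrt N} \sum_{f \le F} \frac{1}{f} \sum_{N^- < p < N^+} \sqrt{4p - t_p^2}\; L\!\left(1, \chi_{(4p - t_p^2)/f^2}\right) + \text{error terms},
\]
with $F$ a suitable truncation parameter. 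Truncating each $L$-value as a partial character sum of length $Y_0$ and swapping the $n$- and $p$-summations converts this into a linear combination of short-interval counts of primes $p \in (N^-, N^+)$ satisfying $p \equiv a \pmod{m}$, with moduli $m = nf^2$ and residue classes $a$ depending on $N$.

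At this stage Conjecture~\ref{bdh conj} enters. For suitable choices of $F$ and $Y_0$ (slightly smaller than $\sqrt N$ in power), the moduli fall within the range covered by the conjecture with $\eta = 1/2 - (\gamma + 2) \log \log N / \log N$. A Cauchy--Schwarz argument against the Barban--Davenport--Halberstam variance bound would control the off-diagonal (non-principal character) contribution by $O((\log N)^{-(1+\gamma)})$, while the diagonal (principal character) contribution, via the short-interval prime number theorem, produces the main term. This main term factors as $N/(\varphi(N)\log N)$ times an Euler product of local densities over primes $\ell$, which by case analysis matches $K(N)$: the factor at $\ell \dnd N$ counts Frobenius conjugacy classes in $\mathrm{GL}_2(\F_\ell)$ with trace congruent to $p + 1 - N$, producing the factor involving $\leg{N-1}{\ell}^2$; the factors at $\ell \mid N$ split according to the parity of $\nu_\ell(N)$, reflecting whether the $\ell$-Sylow subgroup of $E(\F_p)$ is cyclic or of type $(\ell^a, \ell^a)$, with the Kronecker symbol $\leg{-N_\ell}{\ell}$ governing the splitting of $\ell$ in the relevant quadratic order.

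The main obstacle will be the bookkeeping required to execute this plan uniformly and achieve the precise error $O((\log N)^{-(1+\gamma)})$. One must simultaneously control the truncation error in $L(1, \chi_d)$, the tail $f > F$ of the conductor sum, the contribution from curves with extra automorphisms, and deploy Conjecture~\ref{bdh conj} in its \emph{barely admissible} regime $\eta \approx 1/2$---which is precisely what forces the logarithmic correction to $\eta$ and the stated lower bounds on $A$, $B$, and $AB$. The oddness hypothesis on $N$ is used to side-step pathologies at $\ell = 2$. Assembling the local densities into the closed-form product expression for $K(N)$ is a delicate but essentially mechanical Euler-product computation, which should follow once the local factors are correctly extracted.
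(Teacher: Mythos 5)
Your proposal follows essentially the same route as the paper: reduce the average over $\B(A,B)$ to an average of Kronecker class numbers via Deuring's theorem, expand via the analytic class number formula into a double sum over conductors $f$ and truncated Dirichlet $L$-series, swap summations so the inner object is a short-interval prime count in progressions to moduli $nf^2$, control the variance via Cauchy--Schwarz and Conjecture~\ref{bdh conj} with $\eta$ just below $1/2$, and reassemble the principal-character main term as an Euler product of local densities matching $K(N)N/\varphi(N)$. The one nontrivial technical wrinkle you fold into ``bookkeeping'' but that the paper handles explicitly is the change of weights: the quantity $\sqrt{|D_N(p)|}/p$ is not constant across the interval $(N^-,N^+)$, so the paper partitions into subintervals of length $\sqrt N/(\log N)^{\gamma+2}$, freezes the weight on each, applies Theorem~\ref{thmaverageSV} piecewise, and recovers the $\frac{N}{\varphi(N)\log N}$ main term by Euler--Maclaurin summation over the subintervals.
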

\begin{rmk}
We note that $K(N)$ is uniformly bounded as a function of $N$.
We also note that $N/\varphi(N)\ll\log\log N$ (see~\cite[Theorem 328]{HW:1979} for example), which
gives the upper bound of Theorem \ref{upper bound thm}.
Working with V. Chandee and D. Koukoulopoulos, the authors have recently shown that the upper bound 
implicit in Theorem~\ref{main thm} holds unconditionally.  That is, Theorem~\ref{upper bound thm} holds 
with $\log\log N$ replaced by $N/\varphi(N)$.
\end{rmk}

The average of Theorem~\ref{main thm} displays some
interesting characteristics that are not present in the average order~\eqref{reg avg of M}.
In particular, the main term of the average in Theorem~\ref{main thm}
does not depend solely on the size of the integer $N$ but also on some arithmetic properties of
$N$ as it involves the factor $K(N)N/\varphi(N)$.
The occurrence of the weight $\varphi(N)$ appearing in the denominator  seems to suggest that
this is another example of the Cohen-Lenstra Heuristics~\cite{CL:1984,CL:1984-2},
which predict that random groups $G$ occur with probability weighted by $1/\#\Aut(G)$.
Notice that if as an additive group $E(\F_p) \simeq\Z/N\Z$,
then $\#\Aut(E(\F_p)) = \varphi(N)$.  Indeed, the Cohen-Lenstra Heuristics predict that
relative to other groups of same size, the cyclic groups are the most likely to occur since they have the 
fewest number of automorphisms.

In some recent work, the authors explored this connection further by
considering the average of
\begin{equation*}
M_E(G) := \# \left\{ p : E(\F_p) \simeq G \right\}
\end{equation*}
for those Abelian groups $G$ which may arise as the group of $\F_p$-rational points of
an elliptic curve.  
This is the subject of a forthcoming paper~\cite{DS2}.
Given an elliptic curve $E$, it is well-known that
\begin{equation*}
E(\F_p) \simeq \Z/N_1 \Z \times \Z / N_1 N_2 \Z,
\end{equation*}
for some positive integers $N_1, N_2$ satisfying the Hasse bound: $|p+1-N_1^2 N_2| \leq 2 \sqrt{p}$.
Under Conjecture~\ref{bdh conj}, it is shown in~\cite{DS2} that
for every odd order group $G = \Z/N_1 \Z \times \Z/N_1 N_2 \Z$, we have that
\begin{equation*}
\frac{1}{\#\B(A,B)}\sum_{E\in\B(A,B)}M_E(G) \sim
	 K(G) \frac{\# G}{\# \Aut(G)\log(\#G)},
\end{equation*}
provided that $A,B$, and the exponent of $G$ (the size of the largest cyclic subgroup) are large enough with 
respect to $\#G=N_1^2 N_2$.  The function $K(G)$ is explicitly computed and shown to be 
non-zero and absolutely bounded as a function of $G$.


We can express the results of Theorem~\ref{main thm} as stating that for a ``random curve"
$E/\Q$ and a ``random prime" $p \in (N^-, N^+)$,
\begin{eqnarray*}
\mathrm{Prob}\left(\#E(\F_p) = N \right)
\approx \frac{K(N) \frac{N}{\varphi(N) \log(N)}}{ \frac{4 \sqrt{N}}{\log{N}}}
= \frac{K(N) N}{\varphi(N)} \frac{1}{4\sqrt{N}}
\end{eqnarray*}
refining the na\"ive model given by~\eqref{naivemodel}.
Here as in~\eqref{naivemodel}, we make the assumption that there are about
$4\sqrt N/\log N$ primes in the interval $(N^-,N^+)$ though we can not justify such an
assumption even under the Riemann Hypothesis.

There are many open conjectures about the distributions of invariants associated with the 
reductions of a fixed elliptic curve over the finite fields $\F_p$ such as the famous conjectures 
of Koblitz~\cite{Kob:1988} and of Lang and Trotter~\cite{LT:1976}.
The Koblitz Conjecture concerns the number of primes $p\le X$ such that $\#E(\F_p)$ is prime.
The fixed trace Lang-Trotter Conjecture concerns the number of primes $p\le X$ such that
the trace of Frobenius $a_p(E)$ is equal to a fixed integer $t$.  Another 
conjecture of Lang and Trotter (also called the Lang-Trotter Conjecture) 
concerns the number of primes
$p\le X$ such that the Frobenius field $\Q(\sqrt{a_p(E)^2-4p})$ is a fixed 
imaginary quadratic field $K$.

These conjectures are all completely open.
To gain evidence, it is natural to consider the averages for these conjectures over 
some family of elliptic curves.
This has been done by various authors originating with the work of Fouvry and 
Murty~\cite{FM:1996} for the number of supersingular primes (i.e., the fixed trace Lang-Trotter 
Conjecture for $t=0$).  See~\cite{DP:1999,DP:2004,Jam:2004,BBIJ:2005,JS:2011,CFJKP:2011} 
for other averages regarding the fixed trace Lang-Trotter Conjecture.
The average order for the Koblitz Conjecture was considered in~\cite{BCD:2011}.
Very recently, the average has been successfully carried out for the Lang-Trotter Conjecture
on Frobenius fields~\cite{CIJ:pp}.
The average order that we consider in this paper displays a very different character than 
the above averages.   This is primarily because the size of primes considered varies with the 
parameter $N$. Moreover, they all must lie in a very short interval.
This necessitates the use of a short interval version of the Barban-Davenport-Halberstam Theorem 
(Conjecture~\ref{bdh conj} above).
This is also the first time that one observes a Cohen-Lenstra phenomenon governing the 
distribution of the average.


\section{Acknowledgement}
The authors would like to thank K. Soundararajan for pointing out how to improve
the average upper bound of Theorem \ref{upper bound thm} by using short Euler products
holding for almost all characters. They also thank Henri Cohen, Andrew Granville and
Dimitris Koukoulopoulos for useful discussions related to this work and Andrea  Smith for a
careful reading of the manuscript.

\section{Reduction to an average of class numbers}

Given a (not necessarily fundamental) discriminant $D<0$, we follow Lenstra~\cite{Len:1987}
in defining the \textit{Kronecker class number} of discriminant $D$ by
\begin{equation}\label{defn of K-class no}
H(D):=\sum_{\substack{f^2|D\\ \frac{D}{f^2}\equiv 0,1\pmod{4}}}\frac{h(D/f^2)}{w(D/f^2)},
\end{equation}
where $h(d)$ denotes the (ordinary) class number of the unique imaginary quadratic
order of discriminant $d<0$ and $w(d)$ denotes the cardinality of its unit group.
\begin{thm}[Deuring]\label{deuring}
Let $p>3$ be a prime and $t$ an integer such that $t^2-4p<0$.
Then
\begin{equation*}
\sum_{\substack{\tilde{E}/\F_p\\ a_p(E)=t}}\frac{1}{\#\mathrm{Aut}(E)}
=H(t^2-4p),
\end{equation*}
where the sum is over the $\F_p$-isomorphism classes of elliptic curves.
\end{thm}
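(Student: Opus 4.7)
The plan is to invoke Deuring's classification of endomorphism rings of elliptic curves over finite fields together with CM theory. Since $t^2-4p<0$, every curve $E/\F_p$ counted is ordinary, so $\mathrm{End}(E)$ is an order $\mathcal{O}$ in the imaginary quadratic field $K=\Q(\sqrt{t^2-4p})$. The Frobenius endomorphism $\pi_E\in\mathrm{End}(E)$ satisfies $\pi_E^2-t\pi_E+p=0$, so $\Z[\pi_E]$ has discriminant $t^2-4p$ and sits inside $\mathrm{End}(E)\subseteq\mathcal{O}_K$. The condition $t^2-4p<0$ also keeps us outside the supersingular case, where endomorphism rings are orders in a quaternion algebra and Theorem~\ref{deuring} would not apply in this form.

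I would first stratify the sum according to $\mathrm{End}(E)$. The orders $\mathcal{O}\subseteq\mathcal{O}_K$ containing $\Z[\pi_E]$ correspond bijectively to the positive integers $f$ for which $D/f^2$ is itself a discriminant, where $D:=t^2-4p$; that is, $f^2\mid D$ and $D/f^2\equiv 0,1\pmod{4}$. These are exactly the $f$ appearing in the sum \eqref{defn of K-class no}. For each such $\mathcal{O}$, the Deuring correspondence asserts that the set of $\F_p$-isomorphism classes of elliptic curves with $\mathrm{End}(E)\isom\mathcal{O}$ forms a principal homogeneous space under the ideal class group $\mathrm{Cl}(\mathcal{O})$, reduction modulo a prime above $p$ identifying each class with the reduction of a CM curve in characteristic zero. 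In particular, there are exactly $h(\mathrm{disc}(\mathcal{O}))$ such classes, and each satisfies $\#\mathrm{Aut}(E)=w(\mathrm{disc}(\mathcal{O}))$ (equal to $2$, $4$, or $6$ according to whether $\mathrm{disc}(\mathcal{O})<-4$, $=-4$, or $=-3$). Summing over the admissible conductors gives
\begin{equation*}
\sum_{\substack{\tilde E/\F_p\\ a_p(E)=t}}\frac{1}{\#\mathrm{Aut}(E)}
=\sum_{\substack{f^2\mid D\\ D/f^2\equiv 0,1\pmod{4}}}\frac{h(D/f^2)}{w(D/f^2)}
=H(t^2-4p),
\end{equation*}
the last equality being the definition \eqref{defn of K-class no}.

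The main obstacle is the Deuring correspondence invoked above, which carries all the nontrivial CM-theoretic content. Its proof rests on Deuring's lifting theorem, namely that every ordinary $E/\F_p$ admits a lift to a CM elliptic curve in characteristic zero with the same endomorphism ring, together with the transport of the class-group action across this lift via the theory of complex multiplication, and the classification of automorphism groups of elliptic curves in terms of the discriminant. Rather than reproduce this machinery, in a complete write-up I would cite Lenstra's treatment in~\cite{Len:1987}, from which Theorem~\ref{deuring} follows directly after matching the definition of $H(D)$ used there.
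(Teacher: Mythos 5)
The paper's own proof of Theorem~\ref{deuring} is a single citation to Lenstra~\cite[p.~654]{Len:1987}, and you ultimately defer to the same source, so the two ``proofs'' agree at the level of what is actually established. Your outline of the underlying Deuring correspondence for ordinary curves is also accurate as far as it goes: stratify by the endomorphism order $\mathcal{O}$, match the orders $\Z[\pi_E]\subseteq\mathcal{O}\subseteq\mathcal{O}_K$ with the conductors $f$ appearing in~\eqref{defn of K-class no}, use that the $\F_p$-isomorphism classes with $\mathrm{End}_{\F_p}(E)\isom\mathcal{O}$ form a torsor under the class group of $\mathcal{O}$, and read off $\#\mathrm{Aut}_{\F_p}(E)=\#\mathcal{O}^*=w(\mathrm{disc}\,\mathcal{O})$.

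There is, however, one genuine error in your sketch. You assert that $t^2-4p<0$ forces every curve in the sum to be ordinary and that it ``keeps us outside the supersingular case.'' That is false: $t=0$ gives $t^2-4p=-4p<0$, and for $p>3$ the curves $E/\F_p$ with $a_p(E)=0$ are exactly the supersingular ones. The theorem as stated does cover $t=0$, and the paper needs that case: Proposition~\ref{average order in terms of class numbers} holds for all $N$, odd or not, and when $p=N-1$ is a prime in $(N^-,N^+)$ the relevant trace is $0$. The identity $\sum 1/\#\mathrm{Aut}(E)=H(-4p)$ is correct there as well. What reconciles this with your stratification is that the relevant ring is the $\F_p$-rational endomorphism ring: for a supersingular curve over the prime field, $\mathrm{End}_{\F_p}(E)$ is still an order in the imaginary quadratic field $\Q(\sqrt{-p})$ containing $\Z[\pi_E]=\Z[\sqrt{-p}]$; the quaternion order only arises as the geometric endomorphism ring over $\F_{p^2}$. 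With that reading, the Deuring--Waterhouse classification applies uniformly to all $t$ with $t^2<4p$ and $p>3$, and the same bookkeeping yields $H(t^2-4p)$. As written, though, your argument omits the case $t=0$ and explicitly (and incorrectly) claims that it is excluded from the hypotheses.
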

\begin{proof}
See~\cite[p.~654]{Len:1987}.
\end{proof}

The first step in computing the average order of $M_E(N)$ over $\B(A,B)$ is to reduce to an
average of class numbers by using Deuring's Theorem. The following estimate
will then be crucial to obtain the upper bound of Theorem~\ref{upper bound thm}, and is also used
in getting an optimal average length in Theorem~\ref{main thm}.

\begin{prop}\label{upper bound for H on avg}
For primes $p$ in the range $N^-<p<N^+$, we define the quadratic polynomial
\begin{equation}\label{defn of D}
D_N(p):=(p+1-N)^2-4p=p^2-2(N+1)p+(N-1)^2.
\end{equation}
Then
\begin{equation} \label{sumofthethm}
\sum_{N^-<p<N^+}H\left(D_N(p)\right)\ll \frac{N \log{\log{N}}}{\log{N}}.
\end{equation}
\end{prop}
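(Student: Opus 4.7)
My plan is to reduce the sum of Kronecker class numbers to a sum of special values of Dirichlet $L$-functions at $s=1$ via Dirichlet's class number formula, and then to estimate the resulting $L$-sum using short Euler product approximations together with Brun--Titchmarsh. Writing each discriminant $D_N(p) = d_0(p) F(p)^2$ with $d_0(p)$ the associated fundamental discriminant, and applying $h(d)/w(d) = \sqrt{|d|}\, L(1,\chi_d)/(2\pi)$ for $d<0$ inside~\eqref{defn of K-class no}, one obtains an exact identity
\[
H(D_N(p)) = \frac{\sqrt{|D_N(p)|}}{2\pi}\, L(1,\chi_{d_0(p)})\, G(F(p)),
\]
where $G$ is the positive multiplicative function $G(F) = \sum_{f\mid F}\frac{1}{f}\prod_{\ell\mid F/f}(1-\chi_{d_0(p)}(\ell)/\ell)$, bounded pointwise by $\ll\log\log N$ and by $O(1)$ on average. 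Since $|D_N(p)|<4N$, the problem reduces to controlling the weighted $L$-sum $\sum_p L(1,\chi_{d_0(p)})G(F(p))$ over primes $p\in(N^-,N^+)$.

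Following the Granville--Soundararajan strategy indicated in the acknowledgement, I would approximate each $L(1,\chi_{d_0(p)})$ by a short Euler product $\prod_{\ell\leq T}(1-\chi_{d_0(p)}(\ell)/\ell)^{-1}$ with $T=(\log N)^A$ for a suitable constant $A$, setting aside a negligible exceptional set of primes on which the approximation fails. Expanding the truncated Euler product as a Dirichlet series supported on $T$-smooth integers and interchanging summations, the task becomes to bound
\[
\sum_{\substack{n\geq 1 \\ n \text{ is } T\text{-smooth}}} \frac{1}{n}\sum_{N^- < p < N^+} \chi_{d_0(p)}(n)\, G(F(p)).
\]
For each fixed $n$, the Kronecker symbol $\chi_{d_0(p)}(n) = \leg{D_N(p)}{n}$ depends only on $p \bmod 4n$, and since $n\leq T\ll\sqrt N$, Brun--Titchmarsh controls the number of primes in each residue class lying in $(N^-,N^+)$. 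The $n=1$ term contributes the main term $\pi(N^+)-\pi(N^-)\ll\sqrt N/\log N$, while the remaining $T$-smooth tail introduces at most a factor $\prod_{\ell\leq T}(1-1/\ell)^{-1}\ll \log T\ll\log\log N$. Combined with the leading $\sqrt{|D_N(p)|}\ll\sqrt N$, this yields the advertised bound $\sqrt N\cdot\sqrt N\log\log N/\log N = N\log\log N/\log N$.

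The principal difficulty lies in controlling the exceptional set in the short Euler product step. Standard Granville--Soundararajan approximations concern averages over all fundamental discriminants $|d|\leq X$, whereas here the family $\{d_0(p): N^-<p<N^+\}$ is sparse and structured: it has cardinality only $\ll\sqrt N/\log N$ in a set of discriminants of size $O(N)$, and the discriminants arise from evaluating the quadratic polynomial $D_N$ at primes in a very short interval of length $4\sqrt N$. Producing a usable short Euler product approximation uniformly over this family requires moment estimates for $L(1,\chi_{d_0(p)})$ specifically adapted to it, presumably via large sieve inequalities for Kronecker symbols evaluated at polynomial prime arguments, and one must simultaneously handle the multiplicative factor $G(F(p))$ --- whose pointwise worst case $\log\log N$ cannot be afforded alongside the $\log\log N$ already used from the $L$-values, so it must be bounded on average across the same family.
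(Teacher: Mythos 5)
Your reduction via the class number formula and the short--Euler--product strategy matches the paper's approach, and the mechanism (Brun--Titchmarsh plus a $\sqrt f$ bound on the number of residue classes $a$ mod $f$ with $D_N(a)\equiv 0\pmod f$) is the right one. But the ``principal difficulty'' you identify at the end --- that the Granville--Soundararajan exceptional set would require family-adapted moment estimates or large sieve inequalities over the sparse polynomial family $\{d_N^*(p)\}$ --- is a misdiagnosis, and no such machinery is needed. The paper disposes of the exceptional primes by a raw count: fix $\alpha>100$ in~\eqref{shortEP}, so that the exceptional set $\mathscr{E}(4N)$ of primitive characters of conductor at most $4N$ has cardinality $\ll N^{1/50}$. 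Next, if a prime $p$ and cofactor $F$ satisfy $D_N(p)=d_0 F^2$ for a given fundamental discriminant $d_0<0$, then since $D_N(p)=(p-N-1)^2-4N$ this is a representation $(p-N-1)^2+|d_0|F^2=4N$ of $4N$ by a fixed positive definite binary form, and the number of such representations is $\ll\tau(4N)\ll N^{\varepsilon}$. Hence at most $\tau(4N)$ primes can share a given $d_N^*(p)$, the number of exceptional primes is $\ll N^{1/50+\varepsilon}$, and the crude bound $L(1,\chi)\ll\log q$ then makes their total contribution $\ll N^{1/50+\varepsilon}$, utterly negligible against $N\log\log N/\log N$. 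No uniform short--Euler--product approximation over the family is required.

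A smaller issue: by absorbing the divisor sum into the factor $G(F(p))$, you create the need to bound $G$ on average over $p$, which you assert but do not explain. The paper avoids this by keeping the $f$-sum explicit: after a short computation showing that imprimitivity does no harm, the pointwise bound $L(1,\chi_{d_{N,f}(p)})\ll\log\log N$ holds for \emph{every} character induced by a non-exceptional $\chi_{d_N^*(p)}$, which reduces matters to
\begin{equation*}
\log\log N \sum_{f\le 2\sqrt N}\frac{1}{f}\,\#\bigl\{N^-<p<N^+ : f^2\mid D_N(p)\bigr\},
\end{equation*}
and Brun--Titchmarsh combined with Lemma~\ref{bound quad cong soln count} make the $f$-sum converge after pulling out $\sqrt N/\log N$. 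This cleanly isolates the single factor of $\log\log N$. Note also that your pointwise bound $G(F)\ll\log\log N$ is slightly optimistic: since $G(\ell)=1+(1-\chi_{d_0}(\ell))/\ell$ can equal $1+2/\ell$, the worst-case pointwise bound is $(\log\log F)^2$.
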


Before giving the proof of this result, we define
some notation that we will use throughout the remainder of the article.
Given a negative discriminant $d$, we write $\chi_d$ for the Kronecker symbol $\leg{d}{\cdot}$.
Since $d$ is not a perfect square, the Dirichlet $L$-series defined by
\begin{equation*}
L(s,\chi_d):=\sum_{n=1}^\infty\frac{\chi_d(n)}{n^s}
\end{equation*}
converges at $s=1$.
Finally, given a positive integer $f$, we let
\begin{equation}\label{defn of d}
d_{N,f}(p):=\frac{D_N(p)}{f^2},
\end{equation}
where $D_N(p)$ is defined by~\eqref{defn of D}.

\begin{proof}[Proof of Proposition~\ref{upper bound for H on avg}.]
The definition of the Kronecker class number~\eqref{defn of K-class no} and the
class number formula~\cite[p.~515]{IK:2004}
\begin{equation}\label{class no formula}
\frac{h(d)}{w(d)}=\frac{\sqrt{|d|}}{2\pi}L(1,\chi_d)
\end{equation}
give us the identity
\begin{equation*}
\sum_{N^-<p<N^+} H(D_N(p))
= \sum_{N^-<p<N^+}
\sum_{\substack{f^2\mid D_N(p)\\ \frac{D_N(p)}{f^2}\equiv 0,1\pmod 4}}
	\frac{\sqrt{|D_N(p)|}}{2\pi f}L(1,\chi_{d_{N,f}(p)}).
\end{equation*}
Since $|D_N(p)|\le 4N$, this yields
\begin{equation}\label{use bound for D}
\begin{split}
\sum_{N^-<p<N^+} H(D_N(p))
&\ll\sqrt N\sum_{N^-<p<N^+}
\sum_{\substack{f^2\mid D_N(p)\\ \frac{D_N(p)}{f^2}\equiv 0,1\pmod 4}}
	\frac{L(1,\chi_{d_{N,f}(p)})}{f}.
\end{split}
\end{equation}

In order to obtain an optimal bound for this expression, we will use the fact that
for almost all primitive Dirichlet characters $\psi$, $L(1,\psi)$ is well-approximated by a very
short Euler product.  More precisely, fix any integer $\alpha\ge1$.
Then by~\cite[Proposition 2.2]{GS:2003}, we know that
\begin{equation} \label{shortEP}
L(1, \psi) = \prod_{\ell \leq (\log{Q})^\alpha} \left( 1 - \frac{\psi(\ell)}{\ell} \right)^{-1}
	 \left( 1 + \underline{o}(1) \right)
\end{equation}
for all but at most $Q^{2/\alpha + 5 \log{\log{\log{Q}}}/\log{\log{Q}}}$ of the primitive characters of conductor less than $Q$. We remark that this gives a good upper bound for
$L(1,\chi)$ whenever $\chi$ is a Dirichlet character modulo $q\le Q$ which is induced by
a primitive character $\psi$ satisfying~\eqref{shortEP}.
Indeed, let $\chi$ be such a Dirichlet character, and let $\psi$ be the primitive character that
induces $\chi$. Then by~\eqref{shortEP}, we have
\begin{equation*}
\begin{split}
L(1, \chi)
&= \prod_{\ell \mid q} \left( 1 - \frac{\psi(\ell)}{\ell} \right)
 \prod_{\ell\le (\log{Q})^\alpha}\left( 1-\frac{\psi(\ell)}{\ell}\right)^{-1}\left(1+\underline{o}(1) \right)\\
&=\prod_{\substack{\ell\mid q\\ \ell > (\log{Q})^\alpha}} \left(1-\frac{\psi(\ell)}{\ell} \right)
\prod_{\ell \leq (\log{Q})^\alpha} \left( 1 - \frac{\chi(\ell)}{\ell} \right)^{-1}
\left(1+\underline{o}(1) \right)\\
&\ll\prod_{\substack{\ell\mid q\\ \ell > (\log{Q})^\alpha}} \left(1+\frac{1}{\ell} \right)
\prod_{\ell \leq (\log{Q})^\alpha} \left( 1 - \frac{1}{\ell} \right)^{-1}\\
&\ll\prod_{\substack{\ell\mid q\\ \ell > (\log{Q})^\alpha}} \left(1+\frac{1}{\ell} \right)\log\log Q,
\end{split}
\end{equation*}
where the last line follows by Mertens' formula~\cite[p.~34]{IK:2004} since $\alpha$ is fixed.
For the remaining product, we observe that
\begin{equation*}
\prod_{\substack{\ell\mid q\\ \ell > (\log{Q})^\alpha}} \left(1+\frac{1}{\ell} \right)
\le\exp\left\{\sum_{\substack{\ell\mid q\\ \ell>(\log Q)^\alpha}}\frac{1}{\ell}\right\}
\le\exp\left\{\frac{\omega(q)}{(\log Q)^{\alpha}}\right\}
\le\exp\left\{\frac{(\log q)^{1-\alpha}}{\log 2}\right\},
\end{equation*}
where $\omega(q)$ denotes the number of distinct prime factors of $q$.
Therefore, since $\alpha\ge1$,  we may conclude that
if $\chi$ is a character of modulus $q\le Q$ and~\eqref{shortEP} holds for the primitive
character inducing $\chi$, then
\begin{equation}\label{nexcept-L bound}
L(1,\chi)\ll\log\log Q.
\end{equation}

We make use of this fact in~\eqref{use bound for D} as follows.
Let $d_N^*(p)$ be the discriminant of the imaginary
quadratic field $\Q(\sqrt{D_N(p)})$.  Then $d_N^*(p)$ is a fundamental discriminant,
and $\chi_{d_N^*(p)}$ is the primitive character inducing every character of the set
$\{\chi_{d_{N,f}(p)}: f^2\mid D_N(p)\}$.  Furthermore, $|d_N^*(p)|$ is the conductor of
each of these characters, and $3\le |d_N^*(p)|\le 4N$.
Now fix some $\alpha>100$, and
let $\mathscr E(Q)$ be the set of primitive characters of conductor less than or equal to $Q$
for which~\eqref{shortEP} does not hold.  Then $\# \mathscr E(4N)\ll N^{1/50}$.
We now divide the outer sum over $p$ on the right-hand side of ~\eqref{use bound for D} 
according to whether or not the primitive character $\chi_{d_N^*(p)}$ is in the
exceptional set $\mathscr E(4N)$.  For those $p$ for which $\chi_{d_N^*(p)}$ is not
exceptional, we use~\eqref{nexcept-L bound}, writing
\begin{equation*}
\sum_{\substack{N^-<p<N^+\\ \chi_{d_N^*(p)} \not\in \mathscr{E}(4N)}}
\sum_{\substack{f^2\mid D_N(p)\\ \frac{D_N(p)}{f^2}\equiv 0,1\pmod 4}}
	\frac{L(1,\chi_{d_{N,f}(p)})}{f}
\ll\log{\log{N}}\sum_{f\le 2\sqrt N}\frac{1}{f}\sum_{\substack{N^-<p<N^+\\  {f^2\mid D_N(p)}}} 1.
\end{equation*}
To bound the sum over $p$, we apply the Brun-Titchmarsh inequality~\cite[p.~167]{IK:2004},
which gives that
\begin{equation*}
\#\{N^-<p<N^+ : p\equiv a\pmod f\}
\ll\frac{\sqrt N}{\varphi(f)\log(4\sqrt N/f)}.
\end{equation*}
For the sum over $a$, we use the bound
\begin{equation*}
\#\{a\in\Z/f\Z: D_N(a)\equiv 0\pmod f\}\ll\sqrt f,
\end{equation*}
which is Lemma~\ref{bound quad cong soln count} of
Section~\ref{proofs of lemmas}.
Combining these two estimates, we have that
\begin{equation}\label{nexcept-sum bound}
\begin{split}
\sum_{\substack{N^-<p<N^+\\ \chi_{d_N^*(p)} \not\in \mathscr{E}(4N)}}
\sum_{\substack{f^2\mid D_N(p)\\ \frac{D_N(p)}{f^2}\equiv 0,1\pmod 4}}
	\frac{L(1,\chi_{d_{N,f}(p)})}{f}
&\ll \frac{\sqrt N\log{\log{N}}}{\log{N}} \sum_{f\ge 1}\frac{\log f}{f^{1/2}\varphi(f)}\\
&\ll\frac{\sqrt N\log{\log{N}}}{\log{N}}.
\end{split}
\end{equation}

It remains to estimate the sum over primes $p$ such that
$\chi_{d_N^*(p)} \in \mathscr{E}(4N)$.
In that case, we simply need the standard bound
\begin{equation*}
L(1, \chi) \ll \log{q},
\end{equation*}
which is valid for all Dirichlet characters of conductor $q$ 
(see~\cite[p.~96]{Dav:1980} for example).
We note that
\begin{equation*}
\#\{N^-<p<N^+: \chi_{d_N^*(p)}\in\mathscr E(4N)\}\le\# \mathscr E(4N)\tau(4N),
\end{equation*}
where $\tau(n)$ denotes the number of positive divisors of $n$.
Therefore, we obtain the bound
\begin{equation}\label{except-sum bound}
\begin{split}
\sum_{\substack{N^-<p<N^+\\ \chi_{d_N^*(p)}\in \mathscr{E}(4N)}}
\sum_{\substack{f^2\mid D_N(p)\\ \frac{D_N(p)}{f^2}\equiv 0,1\pmod 4}}
	\frac{L(1,\chi_{d_{N,f}(p)})}{f}
&\ll\log{N} \sum_{\substack{N^-<p<N^+\\ \chi_{d_N^*(p)}\in\mathscr{E}(4N)}}
	\sum_{f^2 \mid D_N(p)} \frac{1}{f}\\
&\ll N^{1/50+\varepsilon} 
\end{split}
\end{equation}
for any $\varepsilon > 0$.
Combining~\eqref{use bound for D},~\eqref{nexcept-sum bound}, and~\eqref{except-sum bound}
completes the proof of Proposition~\ref{upper bound for H on avg}.
\end{proof}

\begin{prop}\label{average order in terms of class numbers}
Let $D_N(p)$ be as defined by~\eqref{defn of D}. Then
\begin{equation*}
\frac{1}{\#\B(A,B)}\sum_{E\in\B(A,B)}M_{E}(N)
=\sum_{N^-<p<N^+}\frac{H(D_N(p))}{p}+\mathcal E(N;A,B),
\end{equation*}
where
\begin{equation*}
\mathcal E(N;A,B)\ll
\frac{\log\log N}{N\log N}+\left(\frac{1}{A}+\frac{1}{B}\right)\sqrt N\log\log N+\frac{N^{3/2}\log N\log\log N}{AB}
\end{equation*}
uniformly for $A,B>0$.
\end{prop}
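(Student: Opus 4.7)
The plan is to unfold $M_E(N)$, swap the order of summation between the curves and the primes, stratify the Weierstrass models $(a,b)\in\B(A,B)$ by their residue modulo $p$, and then apply Deuring's theorem (Theorem~\ref{deuring}) to express the resulting $\F_p$-side count in terms of $H(D_N(p))$.

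After the swap,
\[
\sum_{E\in\B(A,B)}M_E(N)=\sum_{N^-<p<N^+}\#\{(a,b)\in\B(A,B):a_p(E_{a,b})=t\},
\]
with $t:=p+1-N$.  I would group $(a,b)$ by $(a_0,b_0):=(a\bmod p,\,b\bmod p)$; since $(a_0,b_0)\in S_p(t):=\{(a_0,b_0)\in\F_p^2:a_p(E_{a_0,b_0})=t\}$ forces $\Delta(E_{a,b})\not\equiv 0\pmod p$, and hence $\Delta(E_{a,b})\ne 0$, the non-singularity condition is automatic for every integer lift.  The elementary count $\#\{c\in\Z:|c|\le C,\,c\equiv c_0\pmod p\}=\frac{2C+1}{p}+O(1)$ then gives
\[
\#\{(a,b)\in\B(A,B):a_p(E_{a,b})=t\}=\frac{4AB}{p^2}|S_p(t)|+O\!\Bigl(|S_p(t)|\Bigl(\tfrac{A+B}{p}+1\Bigr)\Bigr).
\]
The orbit-stabilizer principle for the action $u\cdot(a,b)=(u^4a,u^6b)$ of $\F_p^\times$ on Weierstrass coefficients shows that each $\F_p$-isomorphism class is represented by $(p-1)/\#\Aut$ pairs $(a_0,b_0)$, so Theorem~\ref{deuring} yields $|S_p(t)|=(p-1)H(D_N(p))$.

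Summing over $p$ and dividing by $\#\B(A,B)=4AB+O(A+B)=4AB\bigl(1+O(1/A+1/B)\bigr)$, the main term $\sum_p\frac{(p-1)H(D_N(p))}{p^2}$ differs from $\sum_pH(D_N(p))/p$ by $\sum_pH(D_N(p))/p^2\ll N^{-2}\sum_pH(D_N(p))$; Proposition~\ref{upper bound for H on avg} bounds this by $O(\log\log N/(N\log N))$, producing the first term of $\mathcal E(N;A,B)$.  The remaining two error terms arise from the residue-count error $O(H(D_N(p))(A+B+p))$, summed over $p$ and normalized by $AB$, together with the multiplicative $O(1/A+1/B)$ factor coming from expanding $\#\B(A,B)^{-1}$ against the main term (which is itself $O(\log\log N/\log N)$ by Proposition~\ref{upper bound for H on avg}).

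The main obstacle is obtaining the claimed precision in the residue-count error.  A naive application of Proposition~\ref{upper bound for H on avg} to $\sum_pH(D_N(p))(A+B+p)$ yields only $O(N^2\log\log N/(AB\log N))$, which is weaker than the stated $N^{3/2}\log N\log\log N/(AB)$ by a factor of $\sqrt N/\log^2 N$; recovering the sharper bound requires trading the pointwise estimate $H(D_N(p))\ll\sqrt N\log N$ against the averaged bound of Proposition~\ref{upper bound for H on avg}, and using the fact that $(N^-,N^+)$ contains only $O(\sqrt N/\log N)$ primes, when distributing the $p$-factor across the sum over primes.
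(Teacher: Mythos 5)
Your high-level plan — unfold $M_E(N)$, swap sums, stratify $\B(A,B)$ by reduction modulo $p$, and invoke Deuring's theorem — coincides with the paper's, and your orbit-stabilizer identity $|S_p(t)| = (p-1)H(D_N(p))$ is correct. However, the elementary count $\#\{c\in\Z:|c|\le C,\,c\equiv c_0\pmod p\}=\tfrac{2C+1}{p}+O(1)$ applied residue class by residue class is fundamentally too lossy, and the obstacle you flag at the end is a genuine gap, not one you can balance away.

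Concretely: your error per isomorphism class is $O(1)$ \emph{per residue class in that orbit}, so $O(p)$ per isomorphism class. After dividing by $AB$, multiplying by the $O(H(D_N(p)))$ classes, and summing over the $O(\sqrt N/\log N)$ primes with $p\asymp N$, the term $\tfrac{1}{AB}\sum_p pH(D_N(p))$ is $\gg\tfrac{N^2\log\log N}{AB\log N}$, which exceeds the target $\tfrac{N^{3/2}\log N\log\log N}{AB}$ by a factor $\sqrt N/(\log N)^2$. Your proposed remedy — trading the pointwise bound $H(D_N(p))\ll\sqrt N\log N$ against the averaged bound of Proposition~\ref{upper bound for H on avg} — cannot help: with the pointwise bound you get $\tfrac{1}{AB}\cdot N\cdot\sqrt N\log N\cdot\tfrac{\sqrt N}{\log N}=\tfrac{N^2}{AB}$, which is even worse. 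The factor $p$ coming from the orbit size is genuinely there; there is no allocation of pointwise versus averaged estimates that removes the extra $\sqrt N$.

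What the paper actually uses is the character-sum argument of Fouvry--Murty (cited as \cite[pp.~93--95]{FM:1996}): to count $(a,b)\in\B(A,B)$ reducing to a \emph{fixed isomorphism class} $E_{s,t}$, one detects membership in the orbit $\{(u^4 s, u^6 t):u\in\F_p^\times\}$ using multiplicative characters of $\F_p^\times$, and the resulting incomplete sums $\sum_{|a|\le A,|b|\le B}\chi(a)\chi'(b)$ enjoy square-root cancellation (Gauss sums/P\'olya--Vinogradov). This yields an error of
\begin{equation*}
O\!\left(\frac{AB}{p^2}+\sqrt p(\log p)^2+\frac{(A+B)\log p}{\sqrt p}\right)
\end{equation*}
per isomorphism class (with a slightly worse term when $st=0$, of which there are $O(1)$ classes), rather than the $O(p)$ that the residue-class count forces. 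The term $\sqrt p(\log p)^2$ is where the saving of $\sqrt N/(\log N)^2$ comes from. You should replace the elementary box count per residue class by this character-sum count per isomorphism class; the rest of your argument (swap, Deuring, application of Proposition~\ref{upper bound for H on avg} to the aggregated errors) then goes through exactly as you intend.
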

\begin{rmk}
The above holds without assuming that $N$ is odd.
\end{rmk}

\begin{proof}[Proof of Proposition~\ref{average order in terms of class numbers}]
First, we write $M_E(N)$ as a sum over primes and interchange sums
to obtain
\begin{equation*}
\begin{split}
\frac{1}{\#\B(A,B)}
\sum_{E\in\B(A,B)}M_{E}(N)
&=
\frac{1}{\#\B(A,B)}
\sum_{N^-<p<N^+}
\sum_{\substack{E\in\B(A,B)\\ \#E_p(\F_p)=N}}1.\\
\end{split}
\end{equation*}
For each prime $p$, we group the $E\in\B(A,B)$ according to which isomorphism
class they reduce modulo $p$, writing
\begin{equation*}
\sum_{\substack{E\in\B(A,B)\\ \#E_p(\F_p)=N}}1
=\sum_{\substack{\tilde E/\F_p\\ \#\tilde E(\F_p)=N}}
	\#\{E\in\B(A,B): E_p\isom\tilde E\}.
\end{equation*}
For $N$ large enough ($N\ge 8$), the primes $2$ and $3$ will not enter into
the sum over $p$.  Thus, we will assume
that $p>3$ throughout the remainder of the article.
Therefore, given a elliptic curve defined over $\F_p$, we may associate
a Weierstrass equation, say $E_{s,t}: y^2=x^3+sx+t$ with $s,t\in\F_p$.
Using a character sum argument as in~\cite[pp.~93-95]{FM:1996}, we have
\begin{equation*}
\begin{split}
\#\{E\in\B(A,B): E_p\isom E_{s,t}\}
=\frac{4AB}{\#\Aut(E_{s,t})p}
&+O\left(\frac{AB}{p^2}+\sqrt p(\log p)^2\right)\\
&+\begin{cases}
O\left(\frac{A\log p}{\sqrt p}+\frac{B\log p}{\sqrt p}\right)&\text{if }st\ne 0,\\
O\left(\frac{A\log p}{\sqrt p}+B\log p\right)&\text{if }s=0,\\
O\left(A\log p+\frac{B\log p}{\sqrt p}\right)&\text{if }t=0.
\end{cases}
\end{split}
\end{equation*}
Here $\Aut(E_{s,t})$ denotes the size of the automorphism group of $E_{s,t}$ over $\F_p$.
Substituting this estimate and applying Theorem~\ref{deuring}, we find that
\begin{equation*}
\frac{1}{\#\B(A,B)}
\sum_{E\in\B(A,B)}M_{E}(N)
=
\sum_{N^-<p<N^+}\frac{H(D_N(p))}{p}+\mathcal E(N;A,B),
\end{equation*}
where
\begin{equation*}
\begin{split}
\mathcal E(N;A,B)
&\ll\left\{\frac{1}{N^2}
	+\frac{\log N}{\sqrt N}\left(\frac{1}{A}+\frac{1}{B}\right)
	+\frac{\sqrt N(\log N)^2}{AB}\right\}
	\sum_{N^-<p<N^+}H(D_N(p))\\
&+\left(\frac{1}{A}+\frac{1}{B}\right)\log N\sum_{N^-<p<N^+}1.
\end{split}
\end{equation*}
In estimating the error, we have used the facts that $\#\B(A,B)=4AB+O(A+B)$, $p=N+O(\sqrt N)$ for every
prime $p$ in the interval $(N^-,N^+)$, and there are at most $10$ isomorphism classes $E_{s,t}$ over 
$\F_p$ with $st=0$.
The result now follows by applying Proposition~\ref{upper bound for H on avg} and the 
upper bound $\{N^-<p<N^+\}\ll\sqrt N/\log N$.
\end{proof}

Theorem~\ref{upper bound thm} now follows immediately upon combining
Proposition ~\ref{upper bound for H on avg} and
Proposition~\ref{average order in terms of class numbers}, and noting again that
$p=N+O(\sqrt N)$ for every prime $p$ in the interval
$(N^-,N^+)$.
Furthermore, we have reduced the proof of Theorem~\ref{main thm} to computing
the sum
\begin{equation*}
\sum_{N^-<p<N^+}\frac{H(D_N(p))}{p}.
\end{equation*}
This computation requires several intermediate results.
Therefore, we delay it until Section~\ref{proof of main thm}.

\section{A short average of special values of Dirichlet $L$-functions}

Since Theorem~\ref{main thm} holds only for odd $N$, we assume for the
the remainder of the article that $N$ is an odd integer except during
the proof of Lemma~\ref{bound quad cong soln count}.  Recall that
Lemma~\ref{bound quad cong soln count} was used in the proof of
Proposition~\ref{upper bound for H on avg}, which holds for all positive integers $N$.

In this section, we prove a short average result for special values of Dirichlet
$L$-functions that is needed to compute our average over elliptic curves. As the average
is very short, we need that the equivalent of the Barban-Davenport-Halberstam Theorem holds
for intervals of that size.  

\begin{thm} \label{thmaverageSV}
Let $\gamma>0$.
Suppose that $N^-\le X<X+Y\le N^+$ with $Y\gg \sqrt N/(\log N)^\upsilon$ for some choice of 
$\upsilon\ge 0$.
Assume that Conjecture~\ref{shortBDH} holds for intervals of length $Y$.
Then for odd integers $N$,
\begin{equation*}
\sum_{\substack{f\le 2\sqrt{X+Y}\\ (f,2)=1}}\frac{1}{f}
\sum_{\substack{X<p\le X+Y\\ f^2\mid D_N(p)}}
L\left(1,\chi_{d_{N,f}(p)}\right)\log p = K_0(N)Y + O \left( \frac{Y}{(\log{N})^\gamma} \right),
\end{equation*}
where
\begin{equation}\label{K(N) as a double infinite sum}
K_0(N):=
\sum_{\substack{f=1\\ (f,2)=1}}^\infty\frac{1}{f}\sum_{n=1}^\infty\frac{1}{n\varphi(4nf^2)}
	\sum_{\substack{a\in\Z/4n\Z\\ a\equiv 1\pmod 4}}\leg{a}{n}
	\#C_N(a,n,f),
\end{equation}
and
\begin{equation}\label{defn of C}
C_N(a,n,f):=\left\{z\in(\Z/4nf^2\Z)^*: D_N(z)\equiv af^2\pmod{4nf^2}\right\}.
\end{equation}
\end{thm}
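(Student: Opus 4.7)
The plan is to expand $L(1, \chi_{d_{N,f}(p)})$ as a truncated Dirichlet series, use properties of the Kronecker symbol to express each character value as a function of $p$ modulo a small integer, swap the order of summation so that the sum over primes is innermost, and apply Conjecture~\ref{shortBDH} to approximate each resulting sum over an arithmetic progression in the short interval $(X, X+Y]$.

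First, I would use the Pólya-Vinogradov inequality together with the crude bound $|d_{N,f}(p)| \le 4N/f^2$ to truncate
\[
L(1, \chi_{d_{N,f}(p)}) = \sum_{n \le M} \frac{\chi_{d_{N,f}(p)}(n)}{n} + O\!\left(\frac{\sqrt{N}\,\log N}{Mf}\right),
\]
for a parameter $M$ to be chosen. Multiplied against the outer sums, the truncation error contributes roughly $O(Y \sqrt{N}\,\log N / M)$, so $M$ must be essentially $\sqrt{N}$ times a positive power of $\log N$ in order to guarantee an overall contribution of $O(Y/(\log N)^\gamma)$.

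With the truncation in place, I would swap summations to move the sum over $p$ innermost. The crucial input is that for a discriminant $d$, the Kronecker symbol $\chi_d(n) = \left(\frac{d}{n}\right)$ depends only on $d$ modulo $4n$, and quadratic reciprocity rewrites it as $\left(\frac{a}{n}\right)$, where $a \equiv d \pmod{4n}$ is normalized so that $a \equiv 1 \pmod 4$. Partitioning the primes in $(X, X+Y]$ with $f^2 \mid D_N(p)$ according to the residue class of $p$ modulo $4nf^2$ then produces exactly the set $C_N(a, n, f)$, and the innermost sum reduces to $\sum_{z \in C_N(a, n, f)} \theta(X, Y; 4nf^2, z)$. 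Writing each $\theta(X, Y; 4nf^2, z) = Y/\varphi(4nf^2) + E(X, Y; 4nf^2, z)$ and applying Conjecture~\ref{shortBDH} (via Cauchy-Schwarz on the $E$-terms) extracts the main term $K_0(N) Y$ up to an acceptable error. The rapid decay of the weight $1/(n\, \varphi(4nf^2))$ ensures that truncating the series for $K_0(N)$ at $n \le M$ introduces only a negligible tail.

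The main obstacle is the delicate balance in the choice of $M$: it must be large enough that the Pólya-Vinogradov tail is under control, yet small enough that the moduli $4nf^2$ for $n \le M$ and $f \le 2\sqrt{X+Y}$ remain within the range of validity of Conjecture~\ref{shortBDH}, namely $4nf^2 \ll Y$. This constraint is precisely what dictates the exponent $\eta = 1/2 - (\gamma+2)\log\log N / \log N$ that is imposed in the hypotheses of Theorem~\ref{main thm}. A secondary technical difficulty is the careful evaluation of the Kronecker symbol when $n$ is even or shares common prime factors with $f$: the character at the prime $2$ obeys different rules, which is what ultimately justifies the restriction to odd $f$ (and odd $N$) and the appearance of the modulus $4n$ (rather than $n$) in the definition of $K_0(N)$.
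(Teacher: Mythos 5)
Your overall architecture (truncate the $L$-series, open the character into residue classes $\pmod{4nf^2}$, pull the sum over $p$ innermost to form $\theta(X,Y;4nf^2,b)$, approximate by $Y/\varphi(4nf^2)$, control the fluctuations via Cauchy--Schwarz and Conjecture~\ref{shortBDH}) matches the paper. But the P\'olya--Vinogradov truncation is a genuine gap, not merely a ``delicate balance'': the two constraints you identify are in fact incompatible, and the argument as proposed collapses. With P\'olya--Vinogradov the tail of the $L$-series is $O(\sqrt{|d_{N,f}(p)|}\log N / M) = O(\sqrt N\log N/(fM))$, so already the $f=1$ contribution to the truncation error is of size $Y\sqrt N\log N/M$, forcing $M\gg\sqrt N(\log N)^{1+\gamma}$. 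But the BDH hypothesis only applies to moduli $q\le Q\le Y\ll\sqrt N$, so a single term $4n$ with $n\sim M$ already lies outside the admissible range. There is no window for $M$. The paper avoids this by using Burgess' bound instead, which gives the tail as $O(|d_{N,f}(p)|^{7/32}/\sqrt U)$, i.e.\ a genuine power saving over the square-root barrier. This allows $U$ to be taken as small as $Y/(\log N)^{4\upsilon+10\gamma+18}$, well within the BDH range, while keeping the truncation error $\ll Y N^{-1/32+o(1)}$, which is negligible.

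A second, related omission: you never truncate the sum over $f$ before invoking the BDH conjecture. The sum runs over $f\le 2\sqrt{X+Y}\approx 2\sqrt N$, so the moduli $4nf^2$ reach size $\gg N$, again far outside $Q\le Y$. The paper introduces a second cutoff $V$ for $f$, bounds the tail $f>V$ using Lemma~\ref{bound quad cong soln count} (the number of roots of $D_N\pmod f$ is $\ll\sqrt f$) together with Brun--Titchmarsh, and only then applies Cauchy--Schwarz and BDH to the doubly-truncated sum over $n\le U$, $f\le V$ with $4UV^2\le Y/(\log N)^{4\upsilon+6\gamma+10}$. Both truncations, and in particular the replacement of P\'olya--Vinogradov by Burgess, are essential to close the argument.
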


\begin{proof}
Let $U$ be a real parameter to be determined.
Using partial summation and Burgess' bound for character sums~\cite[Theorem 2]{Bur:1963} to
bound the tail of the $L$-series, we have
\begin{equation*}
L\left(1,\chi_{d_{N,f}(p)}\right)
=\sum_{n\ge 1}\leg{d_{N,f}(p)}{n}\frac{1}{n}
=\sum_{n\le U}\leg{d_{N,f}(p)}{n}\frac{1}{n}
	+O\left(\frac{|d_{N,f}(p)|^{\frac{7}{32}}}{\sqrt U}\right).
\end{equation*}
For $N^-<p<N^+$, we have $|d_{N,f}(p)|\le 4N/f^2$, and hence
\begin{equation*}
\begin{split}
\sum_{\substack{f\le 2\sqrt{X+Y}\\ (f,2)=1}}\frac{1}{f}
\sum_{\substack{X<p\le X+Y\\ f^2\mid D_N(p)}}\log p
\sum_{n> U}\frac{1}{n}
\leg{d_{N,f}(p)}{n}
\ll\frac{YN^{7/32}}{\sqrt U}.
\end{split}
\end{equation*}
Now let $V$ be a real parameter to be determined.
Using Lemma~\ref{bound quad cong soln count}, we obtain
\begin{equation*}
\begin{split}
\sum_{\substack{V<f\le 2\sqrt{X+Y}\\ (f,2)=1}}\frac{1}{f}
\sum_{n\le U}&
\frac{1}{n}
\sum_{\substack{X<p\le X+Y\\ f^2\mid D_N(p)}}
\leg{d_{N,f}(p)}{n}\log p\\
&\ll\log U\log N\sum_{\substack{V<f\le 2\sqrt{X+Y}\\ (f,2)=1}}
	\frac{1}{f}\sum_{\substack{X<k\le X+4\sqrt N\\ f|D_N(p)}}1\\
&\ll\log U\log N\sum_{\substack{V<f\le 2\sqrt{X+Y}\\ (f,2)=1}}
\frac{\sqrt N\#\{z\in\Z/f\Z: D_N(z)\equiv 0\pmod f\}}{f^2}\\
&\ll\sqrt N\log U\log N\sum_{f>V}\frac{1}{f^{3/2}}\\
&\ll\frac{\sqrt N\log U\log N}{\sqrt V},
\end{split}
\end{equation*}
and therefore,
\begin{equation*}
\begin{split}
\sum_{\substack{f\le 2\sqrt{X+Y}\\ (f,2)=1}}\frac{1}{f}
\sum_{\substack{X<p\le X+Y\\ f^2\mid D_N(p)}}
L\left(1,\chi_{d_{N,f}(p)}\right)\log p
&=\sum_{\substack{f\le V\\ (f,2)=1}}\frac{1}{f}
\sum_{n\le U}
\frac{1}{n}
\sum_{\substack{X<p\le X+Y\\ f^2\mid D_N(p)}}
\leg{d_{N,f}(p)}{n}\log p\\
&\quad+O\left(\frac{YN^{7/32}}{\sqrt U}+\frac{\sqrt N\log U\log N}{\sqrt V}\right).
\end{split}
\end{equation*}
With $C_N(a,n,f)$ as defined by~\eqref{defn of C},
we regroup terms on the right hand side above, writing
\begin{equation*}
\begin{split}
\sum_{\substack{f\le V, n\le U\\ (f,2)=1}}\frac{1}{fn}
\sum_{\substack{X<p\le X+Y\\ f^2\mid D_N(p)}}
\leg{d_{N,f}(p)}{n}\log p
&=\sum_{\substack{f\le V, n\le U\\ (f,2)=1}}\frac{1}{fn}
\sum_{\substack{a\in\Z/4n\Z\\ a\equiv 1\pmod 4}}\leg{a}{n}
\sum_{b\in C_N(a,n,f)}\theta(X,Y;4nf^2,b)\\
&\quad+O\left(\sum_{\substack{f\le V, n\le U\\ (f,2)=1}}\frac{1}{fn}
\sum_{\substack{a\in\Z/4n\Z\\ a\equiv 1\pmod 4}}\leg{a}{n}
\sum_{\substack{\substack{X<p\le X+Y\\ f^2\mid D_N(p)}\\ D_N(p)\equiv af^2\\ p\mid 4nf^2}}\log p\right).
\end{split}
\end{equation*}
If $p$ satisfies the congruence
$D_N(p)\equiv af^2\pmod{4nf^2}$ and $p$ divides $4nf^2$, then $p$ divides
$(4nf^2,(N-1)^2-af^2)$.  It follows that such a $p$ must divide $n(N-1)$.
Hence, the error term
\begin{equation*}
\sum_{\substack{f\le V, n\le U\\ (f,2)=1}}\frac{1}{fn}
\sum_{\substack{a\in\Z/4n\Z\\ a\equiv 1\pmod 4}}\leg{a}{n}
\sum_{\substack{\substack{X<p\le X+Y\\ f^2\mid D_N(p)}\\ D_N(p)\equiv af^2\\ p\mid 4nf^2}}\log p
\ll U\log N\log V + U\log U\log V,
\end{equation*}
and
\begin{equation*}
\begin{split}
\sum_{\substack{f\le 2\sqrt{X+Y}\\ (f,2)=1}}\frac{1}{f}
\sum_{\substack{X<p\le X+Y\\ f^2\mid D_N(p)}}
L\left(1,\chi_{d_{N,f}(p)}\right)\log p
&=\sum_{\substack{f\le V, n\le U\\ (f,2)=1}}\frac{1}{fn}
\sum_{\substack{a\in\Z/4n\Z\\ a\equiv 1\pmod 4}}\leg{a}{n}
\sum_{b\in C_N(a,n,f)}\theta(X,Y;4nf^2,b)\\
&\quad+O\left(
		\frac{YN^{7/32}}{\sqrt U}+\frac{\sqrt N\log U\log N}{\sqrt V}
		+U\log(UN)\log V\right).
\end{split}
\end{equation*}
We approximate $\theta(X,Y;4nf^2,b)$ by $Y/\varphi(4nf^2)$, incurring an error of
\begin{equation}\label{error from approximating sum over primes}
\sum_{\substack{f\le V,n\le U\\ (f,2)=1}}\frac{1}{fn}
\sum_{\substack{a\in\Z/4n\Z\\ a\equiv 1\pmod 4}}\leg{a}{n}
\sum_{b\in C_N(a,n,f)}E(X,Y; 4nf^2,b).
\end{equation}
For any given value of $b\in(\Z/4nf^2\Z)^*$, there is at most one value of
$a\in\Z/4n\Z$ satisfying the congruence $D_N(b)\equiv af^2\pmod{4nf^2}$.  Hence, interchanging the two inner sums shows that
\begin{equation*}
\sum_{\substack{a\in\Z/4n\Z\\ a\equiv 1\pmod 4}}\leg{a}{n}
\sum_{b\in C_N(a,n,f)}E(X,Y; 4nf^2,b)
\ll \sum_{b\in(\Z/4nf^2\Z)^*}\left|E(X,Y;4nf^2,b)\right|.
\end{equation*}
By Cauchy-Schwarz,
the error term~\eqref{error from approximating sum over primes} is
\begin{equation*}
\begin{split}
&\ll
\sum_{f\le V}\frac{1}{f}
\sum_{n\le U}
\frac{1}{n}
\sum_{b\in(\Z/4nf^2\Z)^*}\left|E(X,Y;4nf^2,b)\right|\\
&\le
\sum_{f\le V}\frac{1}{f}
\left[
\sum_{n\le U}\frac{\varphi(4nf^2)}{n^2}
\right]^{1/2}
\left[
\sum_{n\le U}
\sum_{b\in(\Z/4nf^2\Z)^*}
\left|E(X,Y;4nf^2,b)\right|^2
\right]^{1/2}\\
&\ll
V\sqrt{\log U}
\left[
\sum_{q\le 4UV^2}\sum_{\substack{a=1\\ (a,q)=1}}^q
\left|E(X,Y;q,a)\right|^2\right]^{1/2}.
\end{split}
\end{equation*}
Assuming Conjecture~\ref{shortBDH} for an appropriate value of $\eta$, we obtain the bound
\begin{equation*}
V\sqrt{\log U}
\left[
\sum_{q\le 4UV^2}\sum_{\substack{a=1\\ (a,q)=1}}^q
\left|E(X,Y;q,a)\right|^2\right]^{1/2}
\ll\frac{YV\sqrt{\log U\log N}}{(\log N)^{2\upsilon+3\gamma+5}}
\end{equation*}
whenever
\begin{equation}\label{bdh restriction on U and V}
UV^2\le \frac{Y}{(\log N)^{4\upsilon+6\gamma+10}}.
\end{equation}
Thus, we have
\begin{equation*}
\begin{split}
\sum_{\substack{f\le 2\sqrt{X+Y}\\ (f,2)=1}}\frac{1}{f}
\sum_{\substack{X<p\le X+Y\\ f^2\mid D_N(p)}}
L\left(1,\chi_{d_{N,f}(p)}\right)\log p
&=Y\sum_{\substack{f\le V,n\le U\\ (f,2)=1}}\frac{1}{fn\varphi(4nf^2)}
	\sum_{\substack{a\in\Z/4n\Z\\ a\equiv 1\pmod 4}}\leg{a}{n}
	\#C_N(a,n,f) \\
&\quad+O \left( \frac{YN^{7/32}}{\sqrt U}+\frac{\sqrt N\log U\log N}{\sqrt V}+U\log(UN)\log V\right)\\
&\quad+O\left(\frac{YV\sqrt{\log U\log N}}{(\log N)^{2\upsilon+3\gamma+5}}\right).
\end{split}
\end{equation*}

\begin{lma}\label{complete K sum}
For any $U,V,\epsilon>0$, we have
\begin{equation*}
\begin{split}
K_0(N)=
\sum_{\substack{f\le V,n\le U\\ (f,2)=1}}\frac{1}{fn\varphi(4nf^2)}
	\sum_{\substack{a\in\Z/4n\Z\\ a\equiv 1\pmod 4}}\leg{a}{n}\#C_N(a,n,f)	
	+O\left(\frac{N^\epsilon}{\sqrt U}+\frac{\log\log N}{V}\right).
\end{split}
\end{equation*}
\end{lma}
We delay the proof of Lemma~\ref{complete K sum} until Section~\ref{proofs of lemmas}.
Applying the lemma and choosing
\begin{equation*}
U=\frac{Y}{(\log N)^{4\upsilon+10\gamma+18}},\quad V=(\log N)^{2\upsilon+2\gamma +4},
\end{equation*}
we have
\begin{equation*}
\begin{split}
\sum_{\substack{f\le 2\sqrt{X+Y}\\ (f,2)=1}}\frac{1}{f}
\sum_{\substack{X<p\le X+Y\\ f^2\mid D_N(p)}}
L\left(1,\chi_{d_{N,f}(p)}\right)\log p
=K_0(N)Y&
+O\left(\frac{Y}{(\log N)^\gamma}\right)
\end{split}
\end{equation*}
provided that $Y\gg\sqrt N/(\log N)^{\upsilon}$.
Note that our choice of $U,V$ satisfies the condition~\eqref{bdh restriction on U and V}.
\end{proof}

\section{Computing the ``almost constant"}

Recall that $C_N(a,n,f)$ was defined by
\begin{equation*}
C_N(a,n,f)=\left\{z\in(\Z/4nf^2\Z)^*: D_N(z)\equiv af^2\pmod{4nf^2}\right\},
\end{equation*}
where $D_N(z)=z^2-2(N+1)z+(N-1)^2$.
The following is the main result of this section.
\begin{prop}\label{product formula}
With $K(N)$ as defined in Theorem~\ref{main thm} and $K_0(N)$ as defined in
Theorem~\ref{thmaverageSV}, we have
\begin{equation*}
\frac{N}{\varphi(N)}K(N)=K_0(N).
\end{equation*}
\end{prop}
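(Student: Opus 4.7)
The plan is to prove the identity by establishing an Euler product factorization of $K_0(N)$ and matching local factors with those of $\frac{N}{\varphi(N)}K(N)$. Since $\frac{N}{\varphi(N)}=\prod_{\ell\mid N}\frac{\ell}{\ell-1}$ and $K(N)$ is already given as a product over primes, the equality reduces to one identity per prime.

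The first step is to swap the roles of $a$ and $z$ in the inner sum of $K_0(N)$. For fixed $n$ and $f$, writing $a\equiv D_N(z)/f^2\pmod{4n}$ gives
\begin{equation*}
\sum_{\substack{a\in\Z/4n\Z\\ a\equiv 1\,(4)}}\leg{a}{n}\#C_N(a,n,f)=\sum_{\substack{z\in(\Z/4nf^2\Z)^*\\ f^2\mid D_N(z)\\ D_N(z)/f^2\equiv 1\,(4)}}\leg{D_N(z)/f^2}{n},
\end{equation*}
which puts $K_0(N)$ in a form suitable for the Chinese Remainder Theorem. Writing $n=\prod_\ell \ell^{b_\ell}$ and $f=\prod_{\ell\text{ odd}}\ell^{c_\ell}$, the modulus $4nf^2$ decomposes with local exponents $e_2=b_2+2$ and $e_\ell=b_\ell+2c_\ell$ for odd $\ell$. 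By multiplicativity of $\varphi$ and of the Jacobi symbol, together with CRT applied to both the counting function and the congruence modulo $4$, the expression factors as
\begin{equation*}
K_0(N)=K_0^{(2)}(N)\prod_{\ell\text{ odd}}K_0^{(\ell)}(N),
\end{equation*}
where each $K_0^{(\ell)}(N)$ is an explicit sum over the local exponents $(b_\ell,c_\ell)$.

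The main computational step is to evaluate each $K_0^{(\ell)}(N)$ in closed form. Using the identity $D_N(z)=(z-(N+1))^2-4N$, I would count units $z$ modulo $\ell^{e_\ell}$ with prescribed $\ell$-valuation of $D_N(z)$ via Hensel's lemma, then weight each contribution by the appropriate Jacobi symbol. The analysis splits into three cases: $\ell\nmid N$, $\ell\mid N$ with $\nu_\ell(N)$ odd, and $\ell\mid N$ with $\nu_\ell(N)$ even. The subcase $\ell\mid N-1$ within $\ell\nmid N$ is what produces the indicator $\leg{N-1}{\ell}^2$ in $K(N)$, and the symbol $\leg{-N_\ell}{\ell}$ in the even-valuation case arises from the splitting behavior of $D_N/\ell^{\nu_\ell(N)}$ over $\F_\ell$. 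In each case I would sum the resulting geometric-type series in $b$ and $c$ to obtain the rational expression in $\ell$ appearing in $K(N)$, multiplied by $\ell/(\ell-1)$ when $\ell\mid N$ to absorb the $N/\varphi(N)$ prefactor. The prime $2$ needs separate treatment because the factor of $4$ in the modulus persists even when $b_2=0$ and $f$ is odd; since $N$ is odd, the resulting factor slots into the $\ell\nmid N$ product.

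The main obstacle is the case analysis at primes $\ell\mid N$, where the count of units $z$ with $\nu_\ell(D_N(z))=k$ depends delicately on the parity of $\nu_\ell(N)$ and on the quadratic residue status of $-N_\ell$ modulo $\ell$. Executing the Hensel lifts carefully, performing the weighted count of units at each level, and then summing the resulting double series in $(b,c)$ to recover the precise rational expressions appearing in $K(N)$ is the technical heart of the argument; the remaining manipulations amount to routine bookkeeping.
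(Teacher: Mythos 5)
Your plan is sound and takes a genuinely different organizational route than the paper. The paper keeps the $a$-sum, factors $\#C_N(a,n,f)$ by CRT into local pieces $\#C_N^{(\ell)}(a,n,f)$, defines a multiplicative-in-$n$ function $c_{N,f}(n)$ built from these pieces, factors the $n$-sum into an Euler product with local factors $F_0,F_1,F_2$ for fixed $f$, and only then factors the remaining $f$-sum (equation~\eqref{separate n and f} and the displays that follow) — a nested, two-stage Euler product in which the $\gcd(n,f)$ structure in $\varphi(4nf^2)$ requires special handling. Your proposal instead swaps the roles of $a$ and $z$ (which is valid: for $f^2\mid D_N(z)$ the residue $a\equiv D_N(z)/f^2\pmod{4n}$ is uniquely determined, and the parity condition on $a$ translates directly), after which $\varphi(4nf^2)$, $1/f$, $1/n$, and the Jacobi symbol all factor cleanly over primes, giving a single global Euler product $K_0(N)=\prod_\ell K_0^{(\ell)}(N)$ with each local factor a \emph{double} series over the exponents $(b_\ell, c_\ell)$ of $n$ and $f$. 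This is conceptually tidier — no $\gcd$ bookkeeping — at the cost of having to sum a two-parameter series at each $\ell$ rather than two separated one-parameter series. Both routes ultimately rest on the same local Hensel count of solutions to $(z-N-1)^2\equiv 4N+af^2\pmod{\ell^e}$ (the content of the paper's Lemma~\ref{counting solutions}), whose case analysis on $\nu_\ell(N)$, its parity, and $\leg{N_\ell}{\ell}$ you correctly identify as the technical heart; your proposal defers that computation, so it is an outline rather than a full proof, but I see no structural gap — carried out carefully it would recover $K(N)\cdot N/\varphi(N)$.
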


\begin{proof}
By the Chinese Remainder Theorem and the definition of $C_N(a,n,f)$,
\begin{equation*}
\#C_N(a,n,f)=
\prod_{\ell\mid 4nf^2}\#C_N^{(\ell)}(a,n,f),
\end{equation*}
where
\begin{equation}\label{crt pieces of C}
C_N^{(\ell)}(a,n,f):=
\{z\in(\Z/\ell^{\nu_\ell(4nf^2)}\Z)^*:
	D_N(z)\equiv af^2\pmod{\ell^{\nu_\ell(4nf^2)}}\}.
\end{equation}
We require the following lemma whose proof we delay until Section~\ref{proofs of lemmas}.

\begin{lma}\label{counting solutions}
Suppose that $N$ and $f$ are odd and that $a\equiv 1\pmod 4$.
Let $\ell$ be any odd prime dividing $nf$, and let
$e=\nu_\ell(4nf^2)=\nu_\ell(nf^2)$.
If $\ell\dnd 4N+af^2$, then
\begin{equation*}
\#C_N^{(\ell)}(a,n,f)=
\begin{cases}
1+\leg{4N+af^2}{\ell}&\text{if }\ell\dnd (N-1)^2-af^2,\\
1&\text{if }\ell\mid (N-1)^2-af^2.
\end{cases}
\end{equation*}
If $\ell\mid 4N+af^2$, then with $s=\nu_\ell(4N+af^2)$, we have
\begin{equation*}
\begin{split}
\#C_N^{(\ell)}(a,n,f)
&=\begin{cases}
2\leg{N+1}{\ell}^2\ell^{s/2}
	&\text{if }1\le s<e,\ 2\mid s, \text{ and }\leg{(4N+af^2)/\ell^s}{\ell}=1,\\
\leg{N+1}{\ell}^2\ell^{\floor{e/2}}
	&\text{if }s\ge e,\\
0&\text{otherwise}.
\end{cases}
\end{split}
\end{equation*}
In particular, if $\ell\mid f$, then
\begin{equation*}
\#C_N^{(\ell)}(1,1,f)=\begin{cases}
1+\leg{N(N-1)^2}{\ell}&\text{if }\ell\dnd N,\\
2\ell^{\nu_\ell(N)/2}&\text{if }1\le\nu_\ell(N)<2\nu_\ell(f),\ 2\mid\nu_\ell(N),
	\text{ and }\leg{N_\ell}{\ell}=1,\\
\ell^{\nu_\ell(f)}&\text{if }2\nu_\ell(f)\le\nu_\ell(N),\\
0&\text{otherwise},
\end{cases}
\end{equation*}
where $N_\ell=N/\ell^{\nu_\ell(N)}$ is the $\ell$-free part of $N$.
Furthermore,
\begin{equation*}
\#C_N^{(2)}(a,n,f)
=\begin{cases}
2&\text{if }\nu_2(4nf^2)=2+\nu_2(n)=2,\\
4&\text{if }\nu_2(4nf^2)=2+\nu_2(n)\ge 3\text{ and }a\equiv 5\pmod 8,\\
0&\text{otherwise}.
\end{cases}
\end{equation*}
\end{lma}

By Lemma~\ref{counting solutions},
we may write
\begin{equation*}
\#C_N^{(2)}(a,n,f)
=2\mathcal S_2(n,a),
\end{equation*}
where
\begin{equation}\label{defn of S_2}
\mathcal S_2(n,a):=\begin{cases}
1&\text{if } 2\dnd n,\\
2&\text{if }2\mid n\text{ and } a\equiv 5\pmod 8,\\
0&\text{otherwise}.
\end{cases}
\end{equation}

Note that if $\ell$ is a prime dividing $f$ and not dividing $n$, then
$af^2\equiv 0\pmod{\ell^{\nu_\ell(4nf^2)}}$ as $\nu_\ell(4nf^2)=\nu_\ell(f^2)$.
Hence, in this case, $\#C_N^{(\ell)}(a,n,f)$ does not depend on the value of $a$, and so we write
$\#C_N^{(\ell)}(a,n,f)=\#C_N^{(\ell)}(1,1,f)$ if $\ell\mid f$ and $\ell\dnd n$.
Therefore, letting $n'$ denote the odd part of $n$ and
\begin{equation*}
c_{N,f}(n):=\sum_{\substack{a\in(\Z/4n\Z)^*\\ a\equiv 1\pmod 4}}
\leg{a}{n}
\mathcal S_2(n,a)
\prod_{\ell\mid n'}
\#C_N^{(\ell)}(a,n,f),
\end{equation*}
we may write
\begin{equation}\label{separate n and f}
\begin{split}
K_0(N)
&=\sum_{\substack{f=1\\ (f,2)=1}}^\infty\frac{1}{f}\sum_{n=1}^\infty\frac{1}{n\varphi(4nf^2)}
	\sum_{\substack{a\in\Z/4n\Z\\ a\equiv 1\pmod 4}}\leg{a}{n}
	\#C_N(a,n,f)\\
&=\sum_{\substack{f=1\\ (f,2)=1}}^\infty\frac{1}{f^2\varphi(f)}
	\sum_{n=1}^\infty\frac{2\varphi((n,f))}{(n,f)n\varphi(4n)}
	\left[\prod_{\substack{\ell\mid f\\ \ell\dnd n}}\#C_N^{(\ell)}(1,1,f)\right]c_{N,f}(n)\\
&=\sum_{f=1}^\infty\strut^\prime
	\frac{\prod_{\ell\mid f}\#C_N^{(\ell)}(1,1,f)}{f^2\varphi(f)}
	\sum_{n=1}^\infty\frac{2\varphi((n,f))}{(n,f)n\varphi(4n)}
	\left[\prod_{\ell\mid (f,n)}\#C_N^{(\ell)}(1,1,f)\right]^{-1}c_{N,f}(n),
\end{split}
\end{equation}
where the prime on the outer sum indicates that the sum is to be restricted to those $f$ which are
odd and are not divisible by any prime for which $\#C_N^{(\ell)}(1,1,f)=0$.

In order to proceed further, we must show how to compute the function $c_{N,f}(n)$.
We summarize the computation in the following lemma whose proof we also delay
until Section~\ref{proofs of lemmas}.

\begin{lma}\label{compute factors of c}
Suppose that $N$ and $f$ are odd.
The function $c_{N,f}(n)$ is multiplicative in $n$.
Let $\alpha$ be a positive integer and $\ell$ an odd prime.
Then
\begin{equation*}
\frac{c_{N,f}(2^\alpha)}{2^{\alpha-1}}=(-1)^\alpha 2.
\end{equation*}
If $\ell\mid f$ and $\ell\dnd N$, then
\begin{equation*}
\frac{c_{N,f}(\ell^\alpha)}{\ell^{\alpha-1}}
=\#C_N^{(\ell)}(1,1,f)
\begin{cases}
\ell-1&\text{if }2\mid\alpha,\\
0&\text{if }2\dnd\alpha.
\end{cases}
\end{equation*}
If $\ell\mid N$ and $\ell\dnd f$, then
\begin{equation*}
\frac{c_{N,f}(\ell^\alpha)}{\ell^{\alpha-1}}=\ell-2.
\end{equation*}
If $\ell\dnd Nf$, then
\begin{equation*}
\begin{split}
\frac{c_{N,f}(\ell^\alpha)}{\ell^{\alpha-1}}
&=\begin{cases}
\ell-2-\leg{N}{\ell}-\leg{N^2-1}{\ell}^2
+\leg{N+1}{\ell}^2
	&\text{if }2\mid\alpha,\\
-1-\leg{-N}{\ell}-\leg{N^2-1}{\ell}^2
+\leg{-N(N+1)^2}{\ell}
	&\text{if }2\dnd\alpha
\end{cases}\\
&=\begin{cases}
\ell-1-\leg{N}{\ell}-\leg{N-1}{\ell}^2
	&\text{if }2\mid\alpha,\\
-1-\leg{N-1}{\ell}^2
	&\text{if }2\dnd\alpha.
\end{cases}
\end{split}
\end{equation*}
If $\ell\mid (f,N)$ and $2\nu_\ell(f)<\nu_\ell(N)$, then
\begin{equation*}
\frac{c_{N,f}(\ell^\alpha)}{\ell^{\alpha-1}}
=\#C_N^{(\ell)}(1,1,f)(\ell-1).
\end{equation*}
If $\ell\mid (f,N)$ and $\nu_\ell(N)<2\nu_\ell(f)$, then
\begin{equation*}
\frac{c_{N,f}(\ell^\alpha)}{\ell^{\alpha-1}}
=\#C_N^{(\ell)}(1,1,f)\begin{cases}
\ell-1&\text{if }2\mid \alpha,\\
0&\text{if }2\dnd\alpha.
\end{cases}
\end{equation*}
If $\ell\mid (f,N)$ and $\nu_\ell(N)=2\nu_\ell(f)$, then
\begin{equation*}
\frac{c_{N,f}(\ell^\alpha)}{\ell^{\alpha-1}}
=\#C_N^{(\ell)}(1,1,f)\begin{cases}
\left(\ell-1-\leg{N_\ell}{\ell}+\leg{-N_\ell}{\ell}\right)&\text{if }2\mid\alpha,\\
\left(\leg{-N_\ell}{\ell}-1\right)&\text{if }2\dnd\alpha,\\
\end{cases}
\end{equation*}
where $N_\ell=N/\ell^{\nu_\ell(N)}$ denotes the $\ell$-free part of $N$.
Furthermore, for any $n$, we have the bound
\begin{equation*}
c_{N,f}(n)\ll\frac{n\prod_{\ell\mid (f,n)}\#C_N(1,1,f)}{\kappa_{2N}(n)},
\end{equation*}
where for any integer $m$, $\kappa_m(n)$ is the multiplicative function defined on prime
powers by
\begin{equation}\label{defn of kappa}
\kappa_m(\ell^\alpha)
:=\begin{cases}
\ell&\text{if }2\dnd\alpha\text{ and }\ell\dnd m,\\
1&\text{otherwise}.
\end{cases}
\end{equation}
\end{lma}

Recalling the restrictions on $f$ in~\eqref{separate n and f} and applying
Lemma~\ref{compute factors of c}, the sum over $n$ in~\eqref{separate n and f} may be factored as
\begin{equation*}
\begin{split}
&\sum_{n=1}^\infty\frac{2\varphi((n,f))}{(n,f)n\varphi(4n)}
	\left[\prod_{\ell\mid (f,n)}\#C_N^{(\ell)}(1,1,f)\right]^{-1}c_{N,f}(n)\\
&\quad=\left\{\sum_{\alpha\ge 0}\frac{2c_{N,f}(2^\alpha)}{2^\alpha\varphi(2^{\alpha+2})}\right\}
	\prod_{\substack{\ell\dnd f\\ \ell\ne 2}}
		\left\{\sum_{\alpha\ge 0}\frac{c_{N,f}(\ell^\alpha)}{\ell^\alpha\varphi(\ell^\alpha)}\right\}
	\prod_{\ell\mid f}
		 \left\{1+ \sum_{\alpha\ge 1}\frac{\varphi((\ell^\alpha,f))c_{N,f}(\ell^\alpha)}
 		 {(\ell^\alpha,f)\ell^\alpha\varphi(\ell^\alpha)\#C_N^{(\ell)}(1,1,f)}\right\}
\\
 &\quad
 =\frac{2}{3}
	\prod_{\substack{\ell\dnd f\\ \ell\mid N}}F_0(\ell)	
	\prod_{\substack{\ell\dnd f\\ \ell\dnd N\\ \ell\ne 2}}F_1(\ell)
	\prod_{\ell\mid f}F_2(\ell,f),
\end{split}
\end{equation*}
where for any odd prime $\ell$, we make the definitions
\begin{align*}
F_0(\ell)&:=\left(1+\frac{\ell-2}{(\ell-1)^2}\right),\\
F_1(\ell)
&:=\left(1-
	\frac{\leg{N-1}{\ell}^2\ell
	+\leg{N}{\ell}+\leg{N-1}{\ell}^2+1}{(\ell-1)(\ell^2-1)}
\right),\\
F_2(\ell,f)&:=\begin{cases}
\left(1+\frac{1}{\ell(\ell+1)}\right)&\text{if }\nu_\ell(N)<2\nu_\ell(f),\\
\left(1+\frac{1}{\ell}\right)&\text{if }\nu_\ell(N)>2\nu_\ell(f),\\
\left(1+
	\frac{\leg{-N_\ell}{\ell}\ell+\leg{-N_\ell}{\ell}-\leg{N_\ell}{\ell}-1}{\ell(\ell^2-1)}
	\right)
	&\text{if }\nu_\ell(N)=2\nu_\ell(f).
\end{cases}
\end{align*}
Substituting this back into equation~\eqref{separate n and f}, we have
\begin{equation}\label{f sum remaining}
K_0(N)
=\frac{2}{3}
	\prod_{\ell\mid N}F_0(\ell)
	\prod_{\substack{\ell \dnd N\\ \ell\ne 2}} F_1(\ell)
	\sum_{\substack{f=1\\ (f,2)=1}}^{\infty}\strut^\prime
		\frac{\prod_{\ell\mid f}\#C_N^{(\ell)}(1,1,f)}{\varphi(f)f^2}
		\prod_{\substack{\ell\mid f\\ \ell\mid N}}\frac{F_2(\ell,f)}{F_0(\ell)}
		\prod_{\substack{\ell\mid f\\ \ell\dnd N}}\frac{F_2(\ell,f)}{F_1(\ell)}.
\end{equation}
The sum over $f$ may be factored as
\begin{equation*}
\begin{split}
&\sum_{\substack{f=1\\ (f,2)=1}}^{\infty}\strut^\prime
		\frac{\prod_{\ell\mid f}\#C_N^{(\ell)}(1,1,f)}{\varphi(f)f^2}
		\prod_{\substack{\ell\mid f\\ \ell\mid N}}\frac{F_2(\ell,f)}{F_0(\ell)}
		\prod_{\substack{\ell\mid f\\ \ell\dnd N}}\frac{F_2(\ell,f)}{F_1(\ell)}\\
&\quad=\prod_{\ell\mid N}
	\left\{1+\sum_{\alpha\ge 1}\frac{\#C_N^{(\ell)}(1,1,\ell^\alpha) F_2(\ell,\ell^\alpha)}
		{\varphi(\ell^\alpha)\ell^{2\alpha}F_0(\ell)}
		\right\}
\prod_{\substack{\ell\dnd N\\ \ell\ne 2}}
	\left\{1+\sum_{\alpha\ge 1}\frac{\#C_N^{(\ell)}(1,1,\ell^\alpha) F_2(\ell,\ell^\alpha)}
		{\varphi(\ell^\alpha)\ell^{2\alpha}F_1(\ell)}
	\right\}.
\end{split}
\end{equation*}
When $\ell\dnd 2N$, the factor simplifies as
\begin{equation*}
\begin{split}
1+\sum_{\alpha\ge 1}\frac{\#C_N^{(\ell)}(1,1,\ell^\alpha) F_2(\ell,\ell^\alpha)}
		{\varphi(\ell^\alpha)\ell^{2\alpha}F_1(\ell)}
&=1+\frac{\ell C_N^{(\ell)}(1,1,\ell)F_2(\ell,\ell)}{F_1(\ell)(\ell-1)}
	\sum_{\alpha\ge 1}\frac{1}{\ell^{3\alpha}}\\
&=1+\frac{\left(1+\leg{N(N-1)^2}{\ell}\right)(\ell^2+\ell+1)}{(\ell^2-1)(\ell^3-1)F_1(\ell)}\\
&=1+\frac{1+\leg{N(N-1)^2}{\ell}}{(\ell^2-1)(\ell-1)F_1(\ell)}.
\end{split}
\end{equation*}
When $\nu_\ell(N)$ is odd, the factor 
simplifies as
\begin{equation*}
\begin{split}
1&+\sum_{\alpha\ge 1}\frac{\#C_N^{(\ell)}(1,1,\ell^\alpha) F_2(\ell,\ell^\alpha)}
		{\varphi(\ell^\alpha)\ell^{2\alpha}F_0(\ell)}\\
\quad&=1+\frac{\ell}{F_0(\ell)(\ell-1)}\sum_{\alpha\ge 1}
	\frac{\#C_N^{(\ell)}(1,1,\ell^\alpha) F_2(\ell,\ell^\alpha)}{\ell^{3\alpha}}\\
&=1+\frac{\ell}{F_0(\ell)(\ell-1)}
	\sum_{\alpha=1}^{\floor{\nu_\ell(N)/2}}
	\frac{\ell^{\alpha}\left(1+\frac{1}{\ell}\right)}{\ell^{3\alpha}}\\
&=1+\frac{(\ell+1)(1-\ell^{1-\nu_\ell(N)})}{F_0(\ell)(\ell-1)(\ell^2-1)}\\
&=1+\frac{1-\ell^{1-\nu_\ell(N)}}{F_0(\ell)(\ell-1)^2}\\
&=1+\frac{\ell^{\nu_\ell(N)}-\ell}{F_0(\ell)\ell^{\nu_\ell(N)}(\ell-1)^2}.
\end{split}
\end{equation*}
When $\nu_\ell(N)$ positive, even, and $\leg{N_\ell}{\ell}=-1$, the factor simplifies as
\begin{equation*}
\begin{split}
1&+\sum_{\alpha\ge 1}\frac{\#C_N^{(\ell)}(1,1,\ell^\alpha) F_2(\ell,\ell^\alpha)}
		{\varphi(\ell^\alpha)\ell^{2\alpha}F_0(\ell)}\\
\quad&=1+\frac{\ell}{F_0(\ell)(\ell-1)}\sum_{\alpha\ge 1}
	\frac{\#C_N^{(\ell)}(1,1,\ell^\alpha) F_2(\ell,\ell^\alpha)}{\ell^{3\alpha}}\\
&=1+\frac{\ell^{\nu_\ell(N)}-\ell^2}{F_0(\ell)\ell^{\nu_\ell(N)}(\ell-1)^2}
	+\frac{\ell\#C_N^{(\ell)}(1,1,\ell^{\nu_\ell(N)/2})F_2(\ell,\ell^{\nu_\ell(N)/2})}
	{F_0(\ell)(\ell-1)\ell^{3\nu_\ell(N)/2}}\\
&=1+\frac{\ell^{\nu_\ell(N)}-\ell^2}{F_0(\ell)\ell^{\nu_\ell(N)}(\ell-1)^2}
	+\frac{\ell^2-\ell-\leg{-1}{\ell}}{F_0(\ell)\ell^{\nu_\ell(N)}(\ell-1)^2}\\
&=1+\frac{\ell^{\nu_\ell(N)}-\ell-\leg{-1}{\ell}}{F_0(\ell)\ell^{\nu_\ell(N)}(\ell-1)^2}\\
&=1+\frac{\ell^{\nu_\ell(N)}-\ell+\leg{-N_\ell}{\ell}}{F_0(\ell)\ell^{\nu_\ell(N)}(\ell-1)^2}.
\end{split}
\end{equation*}
When $\nu_\ell(N)$ positive, even, and $\leg{N_\ell}{\ell}=1$, the factor simplifies as
\begin{equation*}
\begin{split}
1&+\sum_{\alpha\ge 1}\frac{\#C_N^{(\ell)}(1,1,\ell^\alpha) F_2(\ell,\ell^\alpha)}
		{\varphi(\ell^\alpha)\ell^{2\alpha}F_0(\ell)}
=1+\frac{\ell}{F_0(\ell)(\ell-1)}\sum_{\alpha\ge 1}
	\frac{\#C_N^{(\ell)}(1,1,\ell^\alpha) F_2(\ell,\ell^\alpha)}{\ell^{3\alpha}}\\
&=1+\frac{\ell^{\nu_\ell(N)}-\ell}{F_0(\ell)\ell^{\nu_\ell(N)}(\ell-1)^2}
	+\frac{\ell\#C_N^{(\ell)}(1,1,\ell^{\nu_\ell(N)/2})F_2(\ell,\ell^{\nu_\ell(N)/2})}
	{F_0(\ell)(\ell-1)\ell^{3\nu_\ell(N)/2}}\\
&\quad+\frac{\ell}{F_0(\ell)(\ell-1)}\sum_{\alpha=\frac{\nu_\ell(N)}{2}+1}^\infty
	\frac{\#C_N^{(\ell)}(1,1,\ell^\alpha) F_2(\ell,\ell^\alpha)}{\ell^{3\alpha}}\\
&=1+\frac{\ell^{\nu_\ell(N)}-\ell^2}{F_0(\ell)\ell^{\nu_\ell(N)}(\ell-1)^2}
	+\frac{\ell(\ell^2-1)+\leg{-1}{\ell}\ell+\leg{-1}{\ell}-2}
	{F_0(\ell)(\ell-1)(\ell^2-1)\ell^{\nu_\ell(N)}}\\
&\quad+\frac{\ell}{F_0(\ell)(\ell-1)}\sum_{\alpha=\frac{\nu_\ell(N)}{2}+1}^\infty
	\frac{2\ell^{\nu_\ell(N)/2}\left(1+\frac{1}{\ell(\ell+1)}\right)}{\ell^{3\alpha}}\\
&=1+\frac{\ell^{\nu_\ell(N)}-\ell^2}{F_0(\ell)\ell^{\nu_\ell(N)}(\ell-1)^2}
	+\frac{\ell^2-\ell+\leg{-1}{\ell}}
	{F_0(\ell)(\ell-1)^2\ell^{\nu_\ell(N)}}\\
&=1+\frac{\ell^{\nu_\ell(N)}-\ell+\leg{-N_\ell}{\ell}}{F_0(\ell)(\ell-1)^2\ell^{\nu_\ell(N)}}.
\end{split}
\end{equation*}

Substituting this back into~\eqref{f sum remaining}, we find that
\begin{equation*}
\begin{split}
K_0(N)
&=\frac{2}{3}
	\prod_{\ell \dnd 2N}\left(F_1(\ell)+\frac{1+\leg{N(N-1)^2}{\ell}}{(\ell^2-1)(\ell-1)}\right)
	\prod_{\substack{\ell\mid N\\ 2\dnd\nu_\ell(N)}}
	\left(F_0(\ell)+\frac{(\ell^{\nu_\ell(N)}-\ell)}{\ell^{\nu_\ell(N)}(\ell-1)^2}\right)\\
&\quad\times
	\prod_{\substack{\ell\mid N\\ 2\mid\nu_\ell(N)}}
	\left(F_0(\ell)
	+\frac{\ell^{\nu_\ell(N)}-\ell+\leg{-N_\ell}{\ell}}{\ell^{\nu_\ell(N)}(\ell-1)^2}\right)\\
&=\frac{2}{3}
	\prod_{\ell \dnd 2N}\left(
	1-\frac{\leg{N-1}{\ell}^2\left[\ell+1-\leg{N}{\ell}\right]+\leg{N}{\ell}}{(\ell-1)(\ell^2-1)}
	\right)
	\prod_{\substack{\ell\mid N\\ 2\dnd\nu_\ell(N)}}
	\left(1+\frac{\ell^{\nu_\ell(N)+1}-\ell^{\nu_\ell(N)}-\ell}
	{\ell^{\nu_\ell(N)}(\ell-1)^2}\right)\\
&\quad\times
	\prod_{\substack{\ell\mid N\\ 2\mid\nu_\ell(N)}}
	\left(1+\frac{\ell^{\nu_\ell(N)+1}-\ell^{\nu_\ell(N)}-\ell+\leg{-N_\ell}{\ell}}
	{\ell^{\nu_\ell(N)}(\ell-1)^2}\right)\\
&=\frac{N}{\varphi(N)}
	\prod_{\ell \dnd N}\left(
	1-\frac{\leg{N-1}{\ell}^2\ell+1}{(\ell-1)(\ell^2-1)}
	\right)
	\prod_{\substack{\ell\mid N\\ 2\dnd\nu_\ell(N)}}
	\left(1-\frac{1}{\ell^{\nu_\ell(N)}(\ell-1)}\right)\\
&\quad\times
	\prod_{\substack{\ell\mid N\\ 2\mid\nu_\ell(N)}}
	\left(1-\frac{\ell-\leg{-N_\ell}{\ell}}
	{\ell^{\nu_\ell(N)+1}(\ell-1)}\right).
\end{split}
\end{equation*}
\end{proof}

\section{Proof of Theorem~\ref{main thm}}\label{proof of main thm}

We are now ready to give the proof of our main result.
\begin{proof}[Proof of Theorem~\ref{main thm}.]
By Proposition~\ref{average order in terms of class numbers},
we see that Theorem~\ref{main thm} follows if we show that
\begin{equation*}
\sum_{N^-<p< N^+}\frac{H(D_N(p))}{p}
=K(N)\frac{N}{\varphi(N)\log N}+O\left(\frac{1}{(\log N)^{1+\gamma}}\right).
\end{equation*}
We begin by dividing the interval $(N^-,N^+)$ into intervals of length
$Y:=\frac{\sqrt N}{\floor{(\log N)^{\gamma+2}}}$.  For each integer $k$ in
$I:=[-2\sqrt N/Y, 2\sqrt N/Y)\cap\Z$,
we write $X=X_k:=N+1+kY$.
Thus,
\begin{equation}\label{break into shorter averages}
\sum_{N^-<p< N^+}\frac{H(D_N(p))}{p}
=\sum_{k\in I}\sum_{X_k<p\le X_k+Y}\frac{H(D_N(p))}{p}.
\end{equation}

Recalling the definition of the Kronecker class number, the definition of $d_{N,f}(p)$, and the
class number formula
(see~\eqref{defn of K-class no},~\eqref{defn of d}, and~\eqref{class no formula}),
we have the identity
\begin{equation}\label{analytic Kronecker class number formula}
H(D_N(p))=\frac{1}{2\pi}\sum_{\substack{f^2|D_N(p)\\ d_{N,f}(p)\equiv 0,1\pmod 4}}
\sqrt{|d_{N,f}(p)|}L\left(1,\chi_{d_{N,f}(p)}\right).
\end{equation}
We assume that $N$ is large enough so that there are only odd primes $p$ satisfying
the condition $N^-<p<N^+$.
Therefore, since we assumed that $N$ is odd,
\begin{equation*}
D_N(p)=(p+1-N)^2-4p\equiv 1\pmod 4,
\end{equation*}
and it follows that there are only odd $f$ in the above sum and that
$d_{N,f}(p)\equiv 1\pmod 4$ for each such $f$.
Hence, summing~\eqref{analytic Kronecker class number formula} over all primes $p$
in the range $X<p\le X+Y$ and switching the order of summation, we have
\begin{equation}\label{reduce to short average of L-values}
\begin{split}
\sum_{X<p\le X+Y}\frac{H(D_N(p))}{p}
&=\frac{1}{2\pi}\sum_{X<p\le X+Y}\frac{1}{p}
\sum_{f^2|D_N(p)}
\sqrt{|d_{N,f}(p)|}L\left(1,\chi_{d_{N,f}(p)}\right)\\
&=\frac{1}{2\pi}\sum_{\substack{f\le 2\sqrt{X+Y}\\ (f,2)=1}}\frac{1}{f}
\sum_{\substack{X<p\le X+Y\\ f^2\mid D_N(p)}}
\frac{\sqrt{|D_N(p)|}}{p}L\left(1,\chi_{d_{N,f}(p)}\right).
\end{split}
\end{equation}

We now change ``weights," approximating $\frac{\sqrt{|D_N(p)|}}{p}$ by
$\frac{\sqrt{|D_N(X)|}\log p}{N\log N}$.  If $p$ is a prime in the interval $(X,X+Y]$,
then $p=X+O(Y)$, and hence
\begin{equation*}
D_N(p)
=D_N(X)+O(Y\sqrt N).
\end{equation*}
Let $X^*$ be the value minimizing the function $\sqrt{|D_N(t)|}$ on the interval $[X,X+Y]$.
Since it is also true that $p=N+O(\sqrt N)$, we have that
\begin{equation*}
\left|\frac{\sqrt{|D_N(p)|}}{p}-\frac{\sqrt{|D_N(X)|}\log p}{N\log N}\right|
\ll\begin{cases}
\frac{\sqrt{|D_N(X)|}}{N^{3/2}}+\frac{\sqrt{Y}}{N^{3/4}}&\text{if }N^{\pm}\in [X,X+Y],\\
\frac{\sqrt{|D_N(X)|}}{N^{3/2}}+\frac{Y}{N^{1/2}\sqrt{|D_N(X^*)|}}&\text{otherwise.}
\end{cases}
\end{equation*}
Hence, by Theorem~\ref{thmaverageSV} and Proposition~\ref{product formula}, the right hand side
of~\eqref{reduce to short average of L-values} is equal to
\begin{equation}\label{change weights}
\begin{split}
\frac{K(N)Y\sqrt{|D_N(X)|}}{2\pi \varphi(N)\log N}
+\begin{cases}
O\left(\frac{Y\sqrt{|D_N(X)|}}{N(\log N)^{\gamma+1}}
	+\frac{Y\sqrt{|D_N(X)|}\log N}{N^{3/2}}+\frac{Y^{3/2}\log N}{N^{3/4}}\right)
	&\text{ if }N^\pm\in [X,X+Y],\\
O\left(
\frac{Y\sqrt{|D_N(X)|}}{N(\log N)^{\gamma+1}}
	+\frac{Y\sqrt{|D_N(X)|}\log N}{N^{3/2}}
	+\frac{Y^2\log N}{N^{1/2}\sqrt{|D_N(X^*)|}} \right)
	&\text{otherwise}.
\end{cases}
\end{split}
\end{equation}
Since $D_N(X_k)=0$ for $k$ on the endpoints of the interval $[-2\sqrt N/Y,2\sqrt N/Y]\supset I$,
by the Euler-Maclaurin summation formula, we have
\begin{equation*}
\begin{split}
\sum_{k\in I}\sqrt{|D_N(X_k)|}
&=\int_{-2\sqrt N/Y}^{2\sqrt N/Y}\sqrt{4N-(tY)^2}\diff t
+O\left(\int_{-2\sqrt N/Y}^{2\sqrt N/Y}\frac{\left| tY^2\right|}{\sqrt{4N-(tY)^2}}\diff t\right)\\
&=\frac{2\pi N}{Y}
+O\left(\sqrt N\right),
\end{split}
\end{equation*}
and furthermore,
\begin{equation*}
\sum_{k\in I}\frac{1}{\sqrt{|D_N(X_k^*)|}}
\ll\int_{-2\sqrt N/Y}^{2\sqrt N/Y}\frac{\diff t}{\sqrt{4N-(tY)^2}}
=\frac{1}{Y}\int_{-2\sqrt N}^{2\sqrt N}\frac{\diff u}{\sqrt{4N-u^2}}
=\frac{\pi}{Y}.
\end{equation*}
Whence, summing~\eqref{change weights} over $k\in I$, we have
\begin{equation*}
\sum_{N^-<p< N^+}\frac{H(D_N(p))}{p}
=K(N)\frac{N}{\varphi(N)\log N}
	+O\left( \frac{1}{(\log N)^{\gamma+1}}
	+\frac{Y\log N}{\sqrt N}
	\right).
\end{equation*}
\end{proof}

\section{Proofs of lemmas}\label{proofs of lemmas}


In this section, we give the proofs of the technical lemmas needed in the rest of the paper.

\begin{lma}\label{bound quad cong soln count}
For every positive integer $f$,
\begin{equation*}
\#\{a\in\Z/f\Z: D_N(a)\equiv 0\pmod f\}\ll\sqrt f.
\end{equation*}
\end{lma}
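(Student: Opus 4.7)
The plan is to reduce to a pure square congruence and apply the Chinese Remainder Theorem. First I complete the square: since $D_N(a) = (a-(N+1))^2 - 4N$, the substitution $b = a-(N+1)$ is a bijection of $\Z/f\Z$ with itself that transforms the congruence $D_N(a)\equiv 0\pmod f$ into $b^2\equiv 4N\pmod f$. Hence it suffices to bound $\rho(f):=\#\{b\pmod f: b^2\equiv 4N\pmod f\}$.

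By CRT, $\rho(f)=\prod_{\ell\mid f}\rho_\ell$, where $\rho_\ell$ counts the square roots of $4N$ modulo $\ell^{\nu_\ell(f)}$. I would then bound each local factor by cases. For $\ell$ odd with $\ell\nmid N$, the residue $4N$ is a unit mod $\ell$, so by Hensel's lemma $\rho_\ell\le 2$. For $\ell$ odd with $\ell\mid N$, writing $b=\ell^j b'$ with $\gcd(b',\ell)=1$ (or else $b$ a sufficiently high power of $\ell$) and analyzing the resulting congruence $\ell^{2j}(b')^2\equiv 4N\pmod{\ell^{\nu_\ell(f)}}$ shows $\rho_\ell\ll \ell^{\lfloor \nu_\ell(N)/2\rfloor}$, a bound depending only on $N$. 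The prime $\ell=2$ (which always divides $4N$) admits an analogous elementary analysis, giving $\rho_2$ bounded in terms of $\nu_2(N)$.

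Multiplying these bounds, the contribution from primes dividing $4N$ is $O_N(1)$, while each of the remaining primes dividing $f$ contributes at most $2$. Hence $\rho(f)\ll_N 2^{\omega(f)}$. To conclude I would invoke the elementary uniform bound $2^{\omega(f)}\ll \sqrt f$, which follows from writing $2^{\omega(f)}/\sqrt f=\prod_{\ell^e\| f}2\ell^{-e/2}$ and noting that each local factor is at most $\sqrt 2$, with only the small primes $\ell\in\{2,3\}$ contributing a factor exceeding $1$.

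The only real obstacle is the case analysis for $\rho_\ell$ at primes dividing $4N$: one must separate the regimes $\nu_\ell(f)\le\nu_\ell(4N)$ and $\nu_\ell(f)>\nu_\ell(4N)$, carefully track the exact $\ell$-adic valuation of a putative solution, and handle the anomaly at $\ell=2$. Everything else is elementary bookkeeping, and no deep results are needed.
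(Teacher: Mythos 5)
Your reduction to the congruence $b^2\equiv 4N\pmod f$ via completing the square, followed by CRT, is exactly the paper's route. The problem lies in your local bound at primes $\ell\mid N$: you bound $\rho_\ell\ll\ell^{\lfloor\nu_\ell(N)/2\rfloor}$ and then declare the total contribution from primes dividing $4N$ to be $O_N(1)$, arriving at $\rho(f)\ll_N 2^{\omega(f)}\ll_N\sqrt f$. That $N$-dependence is not a cosmetic blemish: the lemma is invoked in Proposition~\ref{upper bound for H on avg} and again in Theorem~\ref{thmaverageSV}, where one needs $\sum_f \#\{z:D_N(z)\equiv 0\ (f)\}/f^2$ to converge with a constant uniform in $N$ (indeed Theorem~\ref{upper bound thm} is stated ``uniformly for $N\ge 3$''). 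With your bound, the constant can be as large as roughly $\prod_{\ell\mid N}\ell^{\nu_\ell(N)/2}\approx\sqrt N$, which would wreck the error term analysis.

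The fix is to bound $\rho_\ell$ in terms of $e=\nu_\ell(f)$ rather than $\nu_\ell(N)$. Writing $4N=\ell^s N_0$ with $\ell\nmid N_0$: if $s\ge e$ then every solution satisfies $b\equiv 0\pmod{\ell^{\lceil e/2\rceil}}$, giving $\rho_\ell\le\ell^{\lfloor e/2\rfloor}$; if $s<e$ then $s$ must be even, $b=\ell^{s/2}b'$ with $(b')^2\equiv N_0\pmod{\ell^{e-s}}$, and since $N_0$ is a unit there are at most $2$ (resp.\ $4$ for $\ell=2$) choices of $b'$, giving $\rho_\ell\le 2\ell^{s/2}\le 2\ell^{(e-1)/2}$. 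In both cases $\rho_\ell\le 2\ell^{e/2}$ (resp.\ $\le 2\sqrt 2\cdot 2^{e/2}$ for $\ell=2$), and for $\ell\ge 5$ in fact $\rho_\ell\le\ell^{e/2}$. Multiplying over $\ell\mid f$ gives $\rho(f)\le 8\sqrt f$ with an absolute constant, which is exactly what the paper proves. Your $2^{\omega(f)}\ll\sqrt f$ argument at the end is fine; it is the intermediate $O_N(1)$ that needs to be replaced by the $f$-local bound.
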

\begin{rmk}
Recall that since this lemma was used in the proof of Proposition~\ref{upper bound for H on avg},
we do not assume that $N$ is odd here.
\end{rmk}
\begin{proof}[Proof of Lemma~\ref{bound quad cong soln count}.]
First, we use the Chinese Remainder Theorem to write
\begin{equation*}
\#\{a\in\Z/f\Z: D_N(a)\equiv 0\pmod{f}\}
=\prod_{\ell\mid f}\#\{a\in\Z/\ell^{\nu_\ell(f)}\Z: D_N(a)\equiv 0\pmod{\ell^{\nu_\ell(f)}}\}.
\end{equation*}
We will show that
\begin{equation}\label{prime power mod bound}
\#\{a\in\Z/\ell^{e}\Z: D_N(a)\equiv 0\pmod{\ell^{e}}\}
\le\begin{cases}
\max\{\ell^{\floor{e/2}},2\ell^{(e-1)/2}\}&\text{if }\ell>2,\\
\max\{\ell^{\floor{e/2}},4\ell^{(e-1)/2}\}&\text{if }\ell=2.
\end{cases}
\end{equation}
From this, we readily deduce that
\begin{equation*}
\#\{a\in\Z/f\Z: D_N(a)\equiv 0\pmod{f}\}\le 8\sqrt f,
\end{equation*}
which is a more precise result than stated in the lemma.

We now give the proof of~\eqref{prime power mod bound}.
Since
\begin{equation*}
D_N(a)=a^2-2(N+1)a+(N-1)^2=(a-N-1)^2-4N,
\end{equation*}
it suffices to consider the number of solutions to the congruence
\begin{equation}\label{change of var}
Z^2\equiv 4N\pmod{\ell^e}.
\end{equation}
Suppose $z$ is an integer solution to~\eqref{change of var} and
write $4N=\ell^sN_0$ with $(\ell,N_0)=1$.
If $s\ge e$, it follows that $z\equiv 0\pmod{\ell^{\ceil{e/2}}}$, and hence there are at most
$\ell^{\floor{e/2}}$ solutions to~\eqref{change of var}.
Thus, we may assume that $s<e$ and write
\begin{equation*}
z^2= \ell^s(N_0+\ell^{e-s}k)
\end{equation*}
for some integer $k$.
Since $(\ell,N_0)=1$, it follows that $s$ must be even.
Writing $s=2s_0$, we see that $z=\ell^{s_0}x$, where
$x$ is an integer solution to the congruence
\begin{equation}\label{reduced cong}
X^2\equiv N_0\pmod{\ell^{e-s}}.
\end{equation}
So, in particular, $z\equiv\ell^{s_0}x\pmod{\ell^{e-s_0}}$.
Since $(\ell,N_0)=1$, it is a classical exercise as in~\cite[p.~98]{HW:1979} to show that
\begin{equation*}
\#\{X\in\Z/\ell^{e-s}\Z: X^2\equiv N_0\pmod{\ell^{e-s}}\}
\le\begin{cases}
2&\text{if }\ell>2,\\
4&\text{if }\ell=2.
\end{cases}
\end{equation*}
Therefore, there are at most $2\ell^{e-(e-s_0)}=2\ell^{s/2}\le 2\ell^{(e-1)/2}$ solutions
to~\eqref{change of var} when $\ell$ is odd, and there are at most
$4\ell^{e-(e-s_0)}=4\ell^{s/2}\le 4\ell^{(e-1)/2}$
solutions when $\ell=2$.
\end{proof}

\begin{proof}[Proof of Lemma~\ref{complete K sum}]
By Lemma~\ref{compute factors of c},
$c_{N,f}(n)\ll \frac{n\prod_{\ell\mid(f,n)}\#C_N^{(\ell)}(1,1,f)}{\kappa_{2N}(n)}$,
where for any positive integer $m$,
$\kappa_{m}(n)$ is the multiplicative function defined on the prime powers
by~\eqref{defn of kappa}.
Therefore,
\begin{equation}\label{diff bn K and truncated K}
\begin{split}
K_0(N)&-\sum_{\substack{f\le V\\ (f,2)=1}}\frac{1}{f}\sum_{n\le U}\frac{1}{n\varphi(4nf^2)}
	\sum_{\substack{a\in\Z/4n\Z\\ a\equiv 1\pmod 4}}\leg{a}{n}\#C_N(a,n,f)\\
&\quad\ll\sum_{f\le V}\strut^\prime\frac{\prod_{\ell\mid f}\#C_N^{(\ell)}(1,1,f)}{f^2\varphi(f)}
	\sum_{n>U}\frac{2\varphi((n,f))c_{N,f}(n)}{(n,f)n\varphi(4n)
	\prod_{\ell\mid (n,f)}\#C_N^{(\ell)}(1,1,f)}\\
	&\quad\quad+\sum_{f> V}\strut^\prime\frac{\prod_{\ell\mid f}\#C_N^{(\ell)}(1,1,f)}{f^2\varphi(f)}
	\sum_{n\ge 1}\frac{2\varphi((n,f))c_{N,f}(n)}{(n,f)n\varphi(4n)
	\prod_{\ell\mid (n,f)}\#C_N^{(\ell)}(1,1,f)}\\
&\quad\ll\sum_{\substack{f\le V\\ (f,2)=1}}\frac{\prod_{\ell\mid f}\#C_N^{(\ell)}(1,1,f)}{f^2\varphi(f)}
	\sum_{n>U}\frac{1}{\kappa_{2N}(n)\varphi(n)}\\
	&\quad\quad+\sum_{\substack{f> V\\ (f,2)=1}}
	\frac{\prod_{\ell\mid f}\#C_N^{(\ell)}(1,1,f)}{f^2\varphi(f)}
	\sum_{n\ge 1}\frac{1}{\kappa_{2N}(n)\varphi(n)},
\end{split}
\end{equation}
where the primes on the sums on $f$ are meant to indicate that the sums are to be restricted
to odd $f$ such that $\#C_N^{(\ell)}(1,1,f)\ne 0$ for all primes $\ell$ dividing $f$.

In~\cite[Lemma 3.4]{DP:1999}, we find that
\begin{equation*}
\sum_{n>U}\frac{1}{\kappa_1(n)\varphi(n)}
\sim\frac{c_0}{\sqrt U}
\end{equation*}
for some positive constant $c_0$.
In particular, this implies that the full sum converges.
From this we obtain a crude bound for the tail of the sum over $n$
\begin{equation*}
\begin{split}
\sum_{n>U}\frac{1}{\kappa_{2N}(n)\varphi(n)}
&=\sum_{\substack{km>U\\ (m,2N)=1\\ \ell\mid k\Rightarrow\ell\mid 2N}}
	\frac{1}{\kappa_1(m)\varphi(m)\varphi(k)}
=\sum_{\substack{k\ge 1\\ \ell\mid k\Rightarrow\ell\mid 2N}}\frac{1}{\varphi(k)}
	\sum_{\substack{m>U/k\\ (m,2N)=1}}\frac{1}{\kappa_1(m)\varphi(m)}\\
&\ll\sum_{\substack{k\ge 1\\ \ell\mid k\Rightarrow\ell\mid 2N}}\frac{1}{\varphi(k)}
	\frac{\sqrt k}{\sqrt U}
\ll\frac{1}{\sqrt U}\prod_{\ell\mid N}\left(1+\frac{\ell}{(\ell-1)(\sqrt\ell-1)}\right)\\
&=\frac{1}{\sqrt U}\frac{N}{\varphi(N)}
	\prod_{\ell\mid N}\left(1+\frac{1}{\sqrt\ell(\ell-1)}\right)\left(1+\frac{1}{\sqrt\ell}\right)\\
&\ll\frac{1}{\sqrt U}\frac{N}{\varphi(N)}
	\prod_{\ell\mid N}\left(1+\frac{1}{\sqrt\ell}\right).
\end{split}
\end{equation*}
We have already noted that $N/\varphi(N)\ll\log\log N$.
It is a straightforward exercise as in~\cite[p.~63]{MV:2007} to show that
\begin{equation*}
\prod_{\ell\mid N}\left(1+\frac{1}{\sqrt\ell}\right)
<\exp\left\{O\left(\frac{\sqrt{\log N}}{\log\log N}\right)\right\}.
\end{equation*}
Thus, we conclude that
\begin{equation}\label{n tail}
\sum_{n>U}\frac{1}{\kappa_{2N}(n)\varphi(n)}
\ll\frac{N^\epsilon}{\sqrt U}
\end{equation}
for any $\epsilon>0$.
For the full sum over $n$, we need a sharper bound in the $N$-aspect, which we obtain
by writing
\begin{equation}\label{full n sum bound}
\begin{split}
\sum_{n\ge 1}\frac{1}{\kappa_{2N}(n)\varphi(n)}
&=\prod_{\ell\mid 2N}\left(1+\frac{\ell}{(\ell-1)^2}\right)
	\sum_{\substack{n\ge 1\\ (n,2N)=1}}\frac{1}{\kappa_{2N}(n)\varphi(n)}\\
&\le\frac{2N}{\varphi(2N)}\prod_{\ell\mid 2N}\left(1+\frac{1}{\ell(\ell-1)}\right)
	\sum_{n\ge 1}\frac{1}{\kappa_1(n)\varphi(n)}\\
&\ll\log\log N.	
\end{split}
\end{equation}

For any odd prime $\ell$ dividing $f$, we obtain the bounds
\begin{equation*}
\#C_N^{(\ell)}(1,1,f)
\le\begin{cases}
2\ell^{\nu_\ell(N)/2}&\text{if }\nu_\ell(f)>\nu_\ell(N)/2\text{ and }2\mid\nu_\ell(N),\\
\ell^{\nu_\ell(f)}&\text{otherwise}
\end{cases}
\end{equation*}
from Lemma~\ref{counting solutions}.
However, if $\nu_\ell(f)>\nu_\ell(N)/2$ and $2\mid\nu_\ell(N)$,
it follows that $\nu_\ell(f)\ge 1+\nu_\ell(N)/2$, and hence
\begin{equation*}
2\ell^{\nu_\ell(N)/2}\le\ell^{1+\nu_\ell(N)/2}\le\ell^{\nu_\ell(f)}
\end{equation*}
since $\ell>2$.
Therefore, for every odd integer $f$, we have that
\begin{equation*}
\prod_{\ell\mid f}\#C_N^{(\ell)}(1,1,f)\le f,
\end{equation*}
and hence
\begin{equation}\label{f tail}
\sum_{\substack{f> V\\ (f,2)=1}}\frac{\prod_{\ell\mid f}\#C_N^{(\ell)}(1,1,f)}{f^2\varphi(f)}
<\sum_{f>V}\frac{1}{f\varphi(f)}\ll\frac{1}{V}.
\end{equation}
Substituting the bounds~\eqref{n tail},~\eqref{full n sum bound},
and~\eqref{f tail} into~\eqref{diff bn K and truncated K}, the lemma follows.
\end{proof}

\begin{proof}[Proof of Lemma~\ref{counting solutions}]
Upon completing the square, we have that
\begin{equation*}
z^2-2(N+1)z+(N-1)^2-af^2=(z-N-1)^2-(4N+af^2).
\end{equation*}
Since $N$ is odd, the number of invertible solutions to the congruence
\begin{equation*}
(z-N-1)^2\equiv 4N+af^2\pmod{2^\alpha}
\end{equation*}
is the same as the number of invertible solutions to the congruence
$Z^2\equiv 4N+af^2\pmod{2^\alpha}$.
Since $4N+af^2\equiv 4+a\pmod{8}$, the computation of
$C_N^{(2)}(a,n,f)$ is thus reduced to a standard exercise.
See~\cite[p.~98]{HW:1979} for example.

If $\ell\dnd 4N+af^2$, then there are exactly $1+\leg{4N+af^2}{\ell}$
solutions to the congruence
\begin{equation}\label{odd prime power cong}
(z-N-1)^2\equiv 4N+af^2\pmod{\ell^\alpha}.
\end{equation}
However,
if $\ell$ divides the constant term, $(N-1)^2-af^2$, then we have exactly one
invertible solution and exactly one noninvertible solution.

It remains to treat the case when $\ell\mid 4N+af^2$.
We write $4N+af^2=\ell^sm$ with $(m,\ell)=1$.
First, we observe that any solution $z$ to~\eqref{odd prime power cong} must
satisfy $z\equiv N+1\pmod{\ell}$.  Therefore, if $\ell\mid N+1$, then there
are no invertible solutions; if $\ell\dnd N+1$, then every solution is invertible.
Hence, we assume that $\ell\dnd N+1$.  Now, the number of invertible
solutions to~\eqref{odd prime power cong} is equal to the number of
(noninvertible) solutions to
\begin{equation}\label{odd prime power cong with vanishing disc}
Z^2\equiv \ell^sm\pmod{\ell^e}.
\end{equation}
If $s\ge e$, then $Z$ is a solution if and only if
$Z\equiv 0\pmod{\ell^{\ceil{e/2}}}$.
There are exactly $\ell^{e-\ceil{e/2}}=\ell^{\floor{e/2}}$ such values for $Z$
modulo $\ell^e$.
Now, suppose that $0<s<e$.
Then $Z$ is a solution to~\eqref{odd prime power cong with vanishing disc} if and only if
\begin{equation*}
Z^2=\ell^sm+\ell^e k=\ell^s\left(m+\ell^{e-s}\right)
\end{equation*}
for some integer $k$.
Since $\ell\dnd m$ and $s<e$, we see that there can be no such $Z$ if
$s$ is odd or if $\leg{m}{\ell}=-1$.
Thus, we assume that $s=2s_0$, $\leg{m}{\ell}=1$, and we write
$r^2=m+\ell^{e-s}$.
Under this assumption, there are exactly two (distinct modulo $\ell^{e-s}$)
solutions, say $r_1$ and $r_2$, to the
congruence $r^2\equiv m\pmod{\ell^{e-2s_0}}$.
Therefore, if $Z$ is a solution to~\eqref{odd prime power cong with vanishing disc},
then either $Z=\ell^{s_0}(r_1+k_1\ell^{e-s})$ for some integer $k_1$ or
$Z=\ell^{s_0}(r_2+k_2\ell^{e-s})$ for some integer $k_2$.
In other words, $Z\equiv\ell^{s_0}r_1\pmod{\ell^{e-s_0}}$ or
$Z\equiv\ell^{s_0}r_2\pmod{\ell^{e-s_0}}$.
It is not hard to check that if $Z$ satisfies either of these two conditions, then
$Z$ is a solution to~\eqref{odd prime power cong with vanishing disc}.
There are exactly $2\ell^{e-(e-s_0)}=2\ell^{s_0}$ such values
for $Z$ modulo $\ell^e$.
\end{proof}

\begin{proof}[Proof of Lemma~\ref{compute factors of c}]
It is easily checked that $c_{N,f}(1)=1$.
If $n$ is odd, we observe that
\begin{equation*}
c_{N,f}(n)=\sum_{a\in(\Z/n\Z)^*}
\leg{a}{n}
\prod_{\ell\mid n}
\#C_N^{(\ell)}(a,n,f).
\end{equation*}
Now, suppose that $(m,n)=1$.  It follows that at least one of $m$ and $n$ must be odd.
Without loss of generality,
we assume that $n$ is odd.  Choose integers $m_0$ and $n_0$ so that
$4mm_0+nn_0=1$.
Then
\begin{equation*}
\begin{split}
c_{N,f}(n)c_{N,f}(m)&=
\sum_{a_1\in(\Z/n\Z)^*}
\leg{a_1}{n}
\prod_{\ell\mid n}
\#C_N^{(\ell)}(a_1,n,f)\\
&\quad\times
\sum_{\substack{a_2\in(\Z/4m\Z)^*\\ a_2\equiv 1\pmod 4}}
\leg{a_2}{m}
\mathcal S_2(m,a_2)
\prod_{\ell\mid m'}
\#C_N^{(\ell)}(a_2,m,f)\\
&=
\sum_{\substack{a_1\in(\Z/n\Z)^*,\\ a_2\in(\Z/4m\Z)^*\\ a_2\equiv 1\pmod 4}}
\leg{a_14mm_0+a_2nn_0}{nm}
\mathcal S_2(nm,a_14mm_0+a_2nn_0)\\
&\quad\times\prod_{\ell\mid nm'}
\#C_N^{(\ell)}(a_14mm_0+a_2nn_0,nm,f)\\
&=c_{N,f}(nm).
\end{split}
\end{equation*}
Hence $c_{N,f}(n)$ is multiplicative in $n$.

By~\eqref{defn of S_2}, we find that
\begin{equation*}
c_{N,f}(2^\alpha)
=\sum_{\substack{a\in(\Z/2^{\alpha+2}\Z)^*\\ a\equiv 1\pmod 4}}
\leg{a}{2^\alpha}\mathcal S_2(2^\alpha,a)
=2\sum_{\substack{a\in(\Z/2^{\alpha+2}\Z)^*\\ a\equiv 5\pmod 8}}
\leg{a}{2}^\alpha
=(-1)^\alpha 2^{\alpha}
\end{equation*}
for $\alpha\ge 1$.

We now consider the case when $\ell^\alpha$ is an odd prime power.
In view of Lemma~\ref{counting solutions}, it is natural to split
$c_{N,f}(\ell^\alpha)$ into two parts, writing
\begin{align*}
c_{N,f}^{(1)}(\ell^\alpha)&:=
\sum_{\substack{a\in(\Z/\ell^\alpha\Z)^*\\ \ell\dnd 4N+af^2}}
\leg{a}{\ell}^\alpha
\#C_N^{(\ell)}(a,\ell^\alpha,f),\\
c_{N,f}^{(0)}(\ell^\alpha)&:=
\sum_{\substack{a\in(\Z/\ell^\alpha\Z)^*\\ \ell\mid 4N+af^2}}
\leg{a}{\ell}^\alpha
\#C_N^{(\ell)}(a,\ell^\alpha,f)
\end{align*}
so that
\begin{equation}\label{split c for odd ell}
c_{N,f}(\ell^\alpha)
=c_{N,f}^{(1)}(\ell^\alpha)
+c_{N,f}^{(0)}(\ell^\alpha).
\end{equation}

We concentrate on $c_{N,f}^{(1)}(\ell^\alpha)$ first.
Applying Lemma~\ref{counting solutions}, we have
\begin{equation}\label{nonvanishing disc part of c}
\begin{split}
c_{N,f}^{(1)}(\ell^\alpha)
&=
\sum_{\substack{a\in(\Z/\ell^\alpha\Z)^*\\ \ell\dnd 4N+af^2\\ \ell\dnd (N-1)^2-af^2}}
\leg{a}{\ell}^\alpha
\#C_N^{(\ell)}(a,\ell^\alpha,f)+
\sum_{\substack{a\in(\Z/\ell^\alpha\Z)^*\\ \ell\dnd 4N+af^2\\ \ell\mid (N-1)^2-af^2}}
\leg{a}{\ell}^\alpha
\#C_N^{(\ell)}(a,\ell^\alpha,f)\\
&=\sum_{\substack{a\in(\Z/\ell^\alpha\Z)^*\\ \ell\dnd 4N+af^2\\ \ell\dnd (N-1)^2-af^2}}
\leg{a}{\ell}^\alpha
\left[1+\leg{4N+af^2}{\ell}\right]
+\sum_{\substack{a\in(\Z/\ell^\alpha\Z)^*\\ \ell\dnd 4N+af^2\\ \ell\mid (N-1)^2-af^2}}
\leg{a}{\ell}^\alpha\\
&=\ell^{\alpha-1}\left\{
\sum_{\substack{a\in(\Z/\ell\Z)^*\\ \ell\dnd 4N+af^2\\ \ell\dnd (N-1)^2-af^2}}
\leg{a}{\ell}^\alpha
\left[1+\leg{4N+af^2}{\ell}\right]
+\sum_{\substack{a\in(\Z/\ell\Z)^*\\ \ell\dnd 4N+af^2\\ \ell\mid (N-1)^2-af^2}}
\leg{a}{\ell}^\alpha
\right\}.
\end{split}
\end{equation}
In order to finish evaluating this sum, we split into cases.
First, assume that $\ell\mid f$.  Then the sum defining $c_{N,f}^{(1)}(\ell^\alpha)$ is
empty unless $\ell\dnd N$.
In this case, $\ell\mid (N-1)^2-af^2$ if and only if $\ell\mid N-1$.
Therefore, if $\ell\mid f$ and $\ell\dnd N$, then
\begin{equation}\label{c1 ell divides f}
\begin{split}
\frac{c_{N,f}^{(1)}(\ell^\alpha)}{\ell^{\alpha-1}}
&=\begin{cases}\displaystyle
\left[1+\leg{N}{\ell}\right]\sum_{a\in(\Z/\ell\Z)^*}\leg{a}{\ell}^\alpha
	&\text{if }\ell\dnd N-1,\\
\displaystyle
\sum_{a\in(\Z/\ell\Z)^*}\leg{a}{\ell}^\alpha	
	&\text{if }\ell\mid N-1
\end{cases}\\
&=\begin{cases}
(\ell-1)\left[1+\leg{N}{\ell}\right]&\text{if }2\mid\alpha\text{ and }\ell\dnd N-1,\\
\ell -1&\text{if }2\mid\alpha\text{ and }\ell\mid N-1,\\
0&\text{if }2\dnd\alpha
\end{cases}\\
&=\begin{cases}
(\ell-1)\left(1+\leg{N}{\ell}\leg{N-1}{\ell}^2\right)
	&\text{if }2\mid\alpha,\\
0&\text{if }2\dnd\alpha
\end{cases}\\
&=\#C_N^{(\ell)}(1,1,f)\begin{cases}
(\ell-1)&\text{if }2\mid\alpha,\\
0&\text{if }2\dnd\alpha
\end{cases}
\end{split}
\end{equation}
as $\#C_N^{(\ell)}(1,1,f)=\left(1+\leg{N}{\ell}\leg{N-1}{\ell}^2\right)$ in this case.

Now, suppose that $\ell\dnd f$.
Under this assumption, we note that $\leg{a}{\ell}=\leg{af^2}{\ell}$.
Picking up with equation~\eqref{nonvanishing disc part of c} and dividing through
by $\ell^{\alpha-1}$, we have
\begin{equation}\label{make c1 sum full}
\begin{split}
\frac{c_{N,f}^{(1)}(\ell^\alpha)}{\ell^{\alpha-1}}
&=
\sum_{\substack{a\in(\Z/\ell\Z)^*\\ \ell\dnd 4N+af^2\\ \ell\dnd (N-1)^2-af^2}}
\leg{af^2}{\ell}^\alpha
\left[1+\leg{4N+af^2}{\ell}\right]
+\sum_{\substack{a\in(\Z/\ell\Z)^*\\ \ell\dnd 4N+af^2\\ \ell\mid (N-1)^2-af^2}}
\leg{af^2}{\ell}^\alpha\\
&=
\sum_{\substack{a\in(\Z/\ell\Z)^*\\ \ell\dnd 4N+a\\ \ell\dnd (N-1)^2-a}}
\leg{a}{\ell}^\alpha
\left[1+\leg{4N+a}{\ell}\right]
+\sum_{\substack{a\in(\Z/\ell\Z)^*\\ \ell\dnd 4N+a\\ \ell\mid (N-1)^2-a}}
\leg{a}{\ell}^\alpha\\
&=
\sum_{\substack{a\in\Z/\ell\Z\\ a\not\equiv -4N\pmod\ell}}
\leg{a}{\ell}^\alpha
+\sum_{\substack{a\in\Z/\ell\Z\\  a\not\equiv (N-1)^2\pmod\ell}}
\leg{a}{\ell}^\alpha\leg{4N+a}{\ell}\\
&=\left[\sum_{b=1}^\ell\left(1+\leg{b}{\ell}\right)\leg{b-4N}{\ell}^\alpha\right]
-\leg{-N}{\ell}^\alpha-\leg{N^2-1}{\ell}^{2}
\end{split}
\end{equation}
upon completing the sums and making the change of variables $b=4N+a$.
One easily computes that
\begin{equation*}
\sum_{b=1}^\ell\left(1+\leg{b}{\ell}\right)\leg{b-4N}{\ell}^\alpha
=\begin{cases}
\ell-1 &\text{if }\ell\mid N,\\
\ell-1-\leg{N}{\ell} &\text{if }\ell\dnd N\text{ and }2\mid\alpha,\\
-1 &\text{if }\ell\dnd N\text{ and }2\dnd\alpha.
\end{cases}
\end{equation*}
For example, the third case is Exercise 1(b) of~\cite[p.~301]{MV:2007}.
Therefore,
\begin{equation*}
\frac{c_{N,f}^{(1)}(\ell^\alpha)}{\ell^{\alpha-1}}
=\begin{cases}
\ell-2
	&\text{if } \ell\mid N\text{ and }\ell\dnd f,\\
\ell-2-\leg{N}{\ell}-\leg{N^2-1}{\ell}^2
	&\text{if }\ell\dnd Nf\text{ and }2\mid\alpha,\\
-1-\leg{-N}{\ell}-\leg{N^2-1}{\ell}^2
	&\text{if }\ell\dnd Nf\text{ and }2\dnd\alpha.
\end{cases}
\end{equation*}

It remains to compute $c_{N,f}^{(0)}(\ell^\alpha)$.  We first consider the case when $\ell\dnd f$.
Note that the sum defining $c_{N,f}^{(0)}(\ell^\alpha)$ is empty unless $\ell\dnd N$ as well.
Thus, under this assumption, we have that
\begin{equation*}
\begin{split}
c_{N,f}^{(0)}(\ell^\alpha)
&=
\sum_{\substack{a\in(\Z/\ell^\alpha\Z)^*\\ \ell\mid 4N+af^2}}
\leg{af^2}{\ell}^\alpha
\#C_N^{(\ell)}(a,\ell^\alpha,f)\\
&=\sum_{\substack{a\in(\Z/\ell^\alpha\Z)^*\\ \ell\mid 4N+a}}
\leg{a}{\ell}^\alpha
\#C_N^{(\ell)}(a,\ell^\alpha,1)\\
&=\leg{-4N}{\ell}^\alpha
\sum_{\substack{a\in(\Z/\ell^\alpha\Z)^*\\ \ell\mid 4N+a}}
\#C_N^{(\ell)}(a,\ell^\alpha,1).
\end{split}
\end{equation*}
In order to evaluate this last sum, for each $a\in(\Z/\ell^{\alpha}\Z)^*$,
 we choose an integer representative in the range $-4N< a\le\ell^\alpha-4N$.  This ensures that
$0\le\nu_\ell(4N+a)\le\alpha$.
There is exactly one such choice of $a$ so that $\nu_\ell(4N+a)=\alpha$, namely
$a=\ell^\alpha-4N$.
For $1\le t\le\alpha-1$,  the number of such $a$ with $\nu_\ell(4N+a)=t$ is
$(\ell-1)\ell^{\alpha-t-1}$, but for only half of those values is
$\leg{(4N+a)/\ell^t}{\ell}=1$.  Perhaps, the easiest way to see this is to consider the
base-$\ell$ expansion of $4N+a$ for each $a$ in this range.
Therefore, applying Lemma~\ref{counting solutions} again, we have
\begin{equation*}
\begin{split}
\sum_{t=1}^\alpha
\sum_{\substack{0< 4N+a\le\ell^\alpha\\ \nu_\ell(4N+a)=t}}
\#C_N^{(\ell)}(a,\ell^\alpha,1)
&=\leg{N+1}{\ell}^2\ell^{\floor{\alpha/2}}
+\sum_{t=1}^{\floor{(\alpha-1)/2}}
\sum_{\substack{0<4N+a<\ell^\alpha\\ \nu_\ell(4N+a)=2t}}
\#C_N^{(\ell)}(a,\ell^\alpha,1)\\
&=\leg{N+1}{\ell}^2\ell^{\floor{\alpha/2}}
+\sum_{t=1}^{\floor{(\alpha-1)/2}}\frac{(\ell-1)\ell^{\alpha-2t-1}}{2}
	2\leg{N+1}{\ell}^2\ell^t\\
&=\leg{N+1}{\ell}^2\ell^{\floor{\alpha/2}}
	+\leg{N+1}{\ell}^2\ell^{\alpha-1}\left(1-\ell^{-\floor{(\alpha-1)/2}}\right)\\
&=\leg{N+1}{\ell}^2\left(
\ell^{\floor{\alpha/2}}+
\ell^{\alpha-1}\left(1-\ell^{-\floor{(\alpha-1)/2}}\right)
\right)\\
&=\leg{N+1}{\ell}^2\ell^{\alpha-1}.
\end{split}
\end{equation*}
Therefore, if $\ell\dnd fN$, then
\begin{equation*}
c_{N,f}^{(0)}(\ell^\alpha)
=\leg{-N}{\ell}^\alpha\leg{N+1}{\ell}^2\ell^{\alpha-1}.
\end{equation*}

We now compute $c_{N,f}^{(0)}(\ell^\alpha)$ in the case that $\ell\mid f$.
Note that, in this case, the sum defining $c_{N,f}^{(0)}(\ell^\alpha)$ is empty unless
$\ell\mid N$.
Note that $\nu_\ell(4N+af^2)\ge\min\{\nu_\ell(N),2\nu_\ell(f)\}$ with equality holding if
$\nu_\ell(N)\ne 2\nu_\ell(f)$.

First, suppose that $2\nu_\ell(f)<\nu_\ell(N)$.
Noting that $e=\nu_\ell(4\ell^\alpha f^2)>2\nu_\ell(f)$,
by Lemma~\ref{counting solutions} we have that
\begin{equation*}
\#C_N^{(\ell)}(a,\ell^\alpha,f)
=2\ell^{\nu_\ell(f)}=2\#C_N^{(\ell)}(1,1,f)
\end{equation*}
if and only if $\leg{a}{\ell}=\leg{(4N+af^2)/\ell^{2\nu_\ell(f)}}{\ell}=1$.
Hence,
\begin{equation*}
\begin{split}
c_{N,f}^{(0)}(\ell^\alpha)
&=\sum_{\substack{a\in(\Z/\ell^\alpha\Z)^*\\ \ell\mid 4N+af^2}}
\leg{a}{\ell}^\alpha\#C_N^{(\ell)}(a,\ell^\alpha,f)\\
&=2\#C_N^{(\ell)}(1,1,f)\sum_{\substack{a\in(\Z/\ell^\alpha\Z)^*\\ \leg{a}{\ell}=1}}\leg{a}{\ell}^\alpha\\
&=\#C_N^{(\ell)}(1,1,f)\ell^{\alpha-1}(\ell-1)
\end{split}
\end{equation*}
if $1< 2\nu_\ell(f)<\nu_\ell(N)$.

Now, suppose that $\nu_\ell(N)<2\nu_\ell(f)$. Since $e=\nu_\ell(4\ell^\alpha f^2)>\nu_\ell(N)$,
by Lemma~\ref{counting solutions}, we have that
\begin{equation*}
\begin{split}
\#C_N^{(\ell)}(a,\ell^\alpha,f)
&=\begin{cases}
2\ell^{\nu_\ell(N)/2}&\text{if } 2\mid\nu_\ell(N)\text{ and }\leg{N_\ell}{\ell}=1,\\
0&\text{otherwise}
\end{cases}\\
&=\#C_N^{(\ell)}(1,1,f)
\end{split}
\end{equation*}
for every $a\in(\Z/\ell^\alpha\Z)^*$.
Hence,
\begin{equation*}
\begin{split}
c_{N,f}^{(0)}(\ell^\alpha)
&=\sum_{\substack{a\in(\Z/\ell^\alpha\Z)^*\\ \ell\mid 4N+af^2}}
\leg{a}{\ell}^\alpha\#C_N^{(\ell)}(a,\ell^\alpha,f)\\
&=\#C_N^{(\ell)}(1,1,f)\sum_{a\in(\Z/\ell^\alpha\Z)^*}
\leg{a}{\ell}^\alpha\\
&=\#C_N^{(\ell)}(1,1,f)\begin{cases}
\ell^{\alpha-1}(\ell-1)&\text{if }2\mid \alpha,\\
0&\text{if }2\dnd \alpha
\end{cases}
\end{split}
\end{equation*}
if $1\le\nu_\ell(N)<2\nu_\ell(f)$.

Finally, consider the case when $2\nu_\ell(f)=\nu_\ell(N)$.
Let $r=\nu_\ell(f)$ and $s=\nu_\ell(N)$ and write
$f=\ell^rf_\ell$ and $N=\ell^sN_\ell$ with $(\ell,f_\ell N_\ell)=1$.
Then
\begin{equation*}
\begin{split}
c_{N,f}^{(0)}(\ell^\alpha)
&=\sum_{\substack{a\in(\Z/\ell^\alpha\Z)^*\\ \ell\mid 4N+af^2}}
\leg{a}{\ell}^\alpha\#C_N^{(\ell)}(a,\ell^\alpha,f)
=\sum_{a\in(\Z/\ell^\alpha\Z)^*}
\leg{af_\ell^2}{\ell}^\alpha\#C_N^{(\ell)}(a,\ell^\alpha,\ell^rf_\ell)\\
&=\sum_{a\in(\Z/\ell^\alpha\Z)^*}
\leg{a}{\ell}^\alpha\#C_N^{(\ell)}(a,\ell^\alpha,\ell^r).
\end{split}
\end{equation*}
To evaluate this last sum, for each $a\in(\Z/\ell^\alpha\Z)^*$, we choose an
integer representative in the range $-4N_\ell< a\le\ell^\alpha-4N_\ell$.
This ensures that $0\le\nu_\ell(4N_\ell+a)\le\alpha$, and hence that
$2r\le\nu_\ell(4N+a\ell^{2r})\le2r+\alpha$.
Similar to before, there is exactly one choice of $a$ such that
$\nu_\ell(4N_\ell+a)=\alpha$, namely $a=\ell^\alpha-4N_\ell$.
For $1\le t\le\alpha-1$, there are
$(\ell-1)\ell^{\alpha-t-1}$ choices with $\nu_\ell(4N_\ell+a)=t$, but for only half of
those values is $\leg{(4N_\ell+a)/\ell^t}{\ell}=1$.
Note that if $\ell\mid 4N_\ell+a$, then $\leg{a}{\ell}=\leg{-N_\ell}{\ell}$.
Therefore, if $1<\nu_\ell(N)=2\nu_\ell(f)$, then
\begin{equation*}
\begin{split}
c_{N,f}^{(0)}(\ell^\alpha)
&=\sum_{t=0}^\alpha\sum_{\substack{0< 4N_\ell+a\le \ell^\alpha\\ \nu_\ell(4N_\ell+a)=t}}
	\leg{a}{\ell}^\alpha\#C_N^{(\ell)}(a,\ell^\alpha,\ell^r)\\
&=\leg{-N_\ell}{\ell}^\alpha\ell^{\floor{(2r+\alpha)/2}}
	+\sum_{t=1}^{\floor{(\alpha-1)/2}}
	\leg{-N_\ell}{\ell}^\alpha\frac{(\ell-1)\ell^{\alpha-2t-1}}{2}2\ell^{r+t}\\
&\quad+\sum_{\substack{0<4N_\ell+a<\ell^\alpha\\ \leg{4N_\ell+a}{\ell}=1}}
	\leg{a}{\ell}^\alpha 2\ell^r\\
&=\leg{-N_\ell}{\ell}^\alpha\ell^{r+\floor{\alpha/2}}
	+\leg{-N_\ell}{\ell}^\alpha\ell^{\alpha-1+r}\left(1-\ell^{-\floor{(\alpha-1)/2}}\right)\\
&\quad+\ell^{r+\alpha-1}
	\sum_{a\in\Z/\ell\Z}\leg{a}{\ell}^\alpha\left(1+\leg{4N_\ell+a}{\ell}\right)\\
&=\leg{-N_\ell}{\ell}\ell^r\left[\ell^{\floor{\alpha/2}}
	+\ell^{\alpha-1}\left(1-\ell^{-\floor{(\alpha-1)/2}}\right)\right]\\
&\quad+\ell^{r+\alpha-1}
	\sum_{b\in\Z/\ell\Z}\left(1+\leg{b}{\ell}\right)\leg{b-4N_\ell}{\ell}^\alpha\\
&=
\ell^{r+\alpha-1}
\begin{cases}
\leg{-N_\ell}{\ell}+\ell-1-\leg{N_\ell}{\ell}&\text{if }2\mid\alpha,\\
\leg{-N_\ell}{\ell}-1&\text{if }2\dnd\alpha
\end{cases}\\
&=\#C_N^{(\ell)}(1,1,f)\ell^{\alpha-1}
\begin{cases}
\leg{-N_\ell}{\ell}+\ell-1-\leg{N_\ell}{\ell}&\text{if }2\mid\alpha,\\
\leg{-N_\ell}{\ell}-1&\text{if }2\dnd\alpha.
\end{cases}
\end{split}
\end{equation*}
The lemma now follows by combining our computations for $c_{N,f}^{(0)}(\ell^\alpha)$
and $c_{N,f}^{(1)}(\ell^\alpha)$.

\end{proof}

\bibliographystyle{alpha}
\bibliography{references}
\end{document}